\theoremstyle{plain}
\newtheorem{thm}{Theorem}[section]
\newtheorem{lem}[thm]{Lemma}
\newtheorem{coro}[thm]{Corollary}
\theoremstyle{definition}
\newtheorem{defn}{Definition}
\theoremstyle{remark}
\newcommand{\probability}[1]{	\mathbb{P}\left\{#1\right\}}
\newcommand{\expectation}[1]{	\mathbb{E}\left[#1\right]}
\DeclareMathOperator{\sgn}{sgn}
\newcommand{\N}{\mathbb{N}}
\newcommand{\R}{\mathbb{R}}
\newcommand{\eps}{\epsilon}
\title{\small\bf HEAVY TRAFFIC LIMIT FOR THE WORKLOAD\\ PLATEAU PROCESS IN A TANDEM QUEUE\\ WITH IDENTICAL SERVICE TIMES}
\author{ {\small\sc H.\ Christian Gromoll, Bryce Terwilliger, Bert Zwart} \\
{\em\footnotesize University of Virginia and CWI} }
\begin{document}
\maketitle

\begin{abstract}
We consider a two-node tandem queueing network in which the upstream queue has
renewal arrivals with generally distributed service times, and each job reuses
its upstream service requirement when moving to the downstream queue. Both
servers employ the first-in-first-out policy. The reuse of service times
creates strong dependence at the second queue, making its workload difficult
to analyze. To investigate the evolution of workload in the second queue, we
introduce and study a process $M$, called the plateau process, which encodes
most of the information in the workload process. We focus on the case of
infinite-variance service times and show that under appropriate scaling,
workload in the first queue converges, and although the workload in the second
queue does not converge, the plateau process does converges to a limit $M^*$
that is a certain function of two independent L\'evy processes. Using
excursion theory, we derive some useful properties of $M^*$ and compare a time
changed version of it to a limit process derived in previous work. 

\end{abstract}

\noindent {\em AMS 2010 subject classification.} 60K25, 90B22.

\noindent {\em Key words.} Tandem queue, infinite variance, process limit,
L\'evy process, continuous mapping, excursion theory.

\section{Introduction}
The goal of this paper is to establish a stochastic process limit of a
two-node tandem queueing network where the first queue is a $GI/GI/1$ queue
(that is jobs have independent generally distributed service times and
independent generally distributed interarrival times) but in contrast to most
queueing models, customers reuse their specific service requirement when
moving to the second queue.  In other words, once a job's random service
requirement has been generated at the first queue, it will also be that job's
requirement at the second queue.  Both servers process jobs in
first-in-first-out order.

This structure induces a strong dependence between arrivals and services at
the second queue, leading to unusual phenomena and making even simple
performance measures such as the workload difficult to analyze. 

To visualize the effect of identical service times, consider the workload in
the second queue over a generic period during which both queues are busy.
During a given interarrival time for the second queue, its workload will
decrease by exactly the duration of this interarrival time (since we're
assuming the second queue does not empty during this period). But this time
equals the interdeparture time from the first queue, which equals the service
time of the job about to transfer.  Since this job reuses its service time at
the second queue, this also equals the amount of work about to enter the
second queue. So the workload in the second queue simply decreases by this
job's service time and then increases by the same amount when the job
transfers. The effect over a busy period of the second queue is a series of
returns to the same level attained at the previous arrival time.

This continues until a job in service at the first queue is larger than any
previous job in the first queue's busy period.  The workload in the second
queue will then empty and be zero for a while until the job transfers, at
which time the workload will increase to a new level that is higher than the
previous level, and resume a series of returns to this new level until the
next record-setting job comes through. 

Thus, during a busy period of the first queue, the workload in the second queue
is characterized by oscillations below a series of increasing levels or
plateaus. When the first queue experiences a period of idleness, this pattern
in the second queue is interrupted and its workload can reset to a new
starting height for the next series of plateaus. 

The pattern of frequent returns to the same level can be seen in Figure
\ref{fig:ThreeGraphs}, where the workload in the second queue must hit zero
before each level increase.  When compared visually to the workload in the
first queue, it is clear that the behavior is very different because the
workload in the second queue has frequent consecutive local maxima of the same
value, interspersed with occasional increases of that value. 

\begin{figure}\label{fig:ThreeGraphs}
\noindent\includegraphics[width =5.2in]{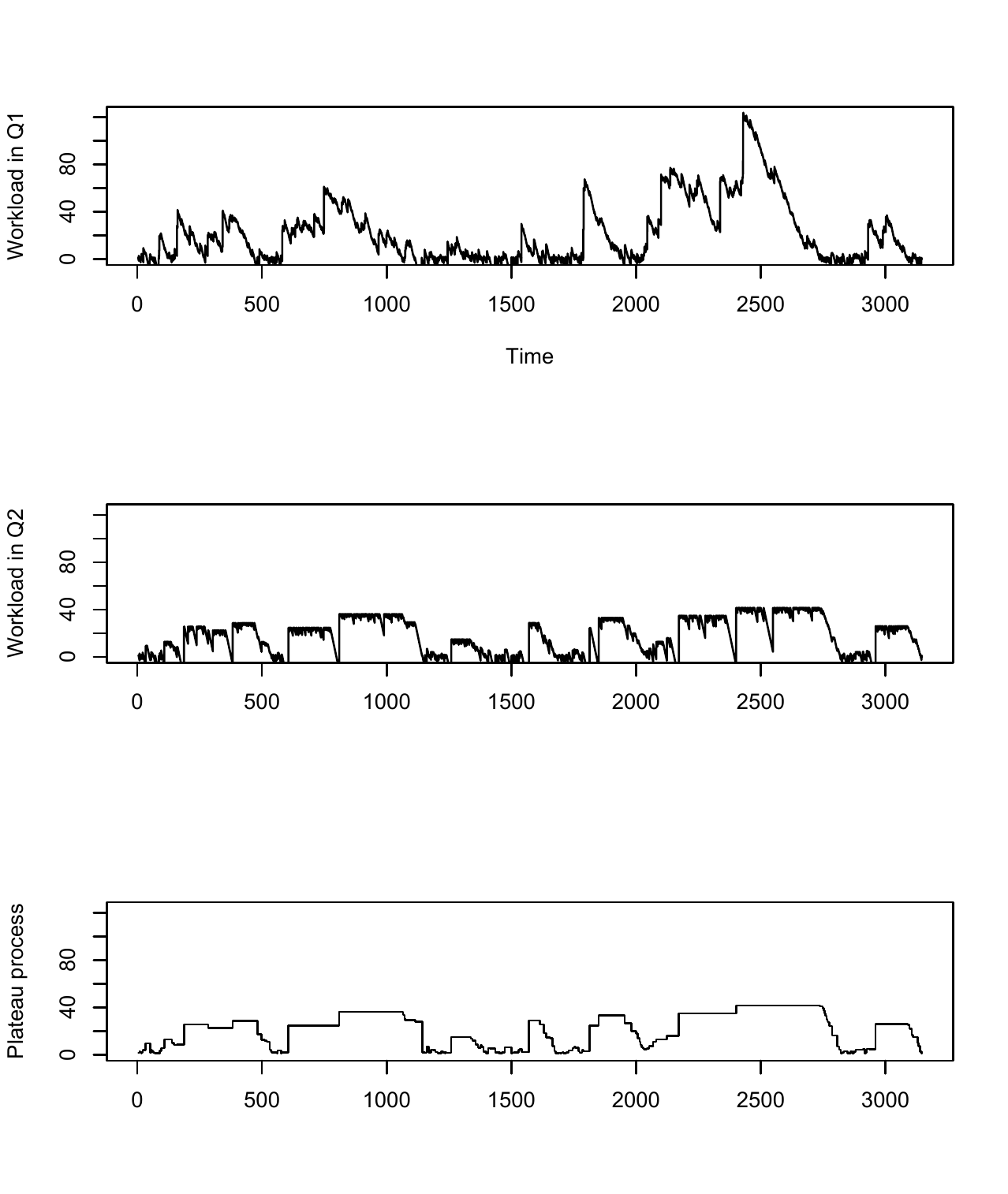}
\caption{The workload in both queues with identical service times in each.
1000 Poisson arrivals with parameter 1/3.1 service times are Pareto(1,3/2).}
\end{figure}

Why study such a model? After all, most queueing models in the literature 
make the Jacksonian assumption that jobs generate new independent service
times at each queue, and there are good reasons for this. For one, the
independence assumption is crucial to the mathematical techniques most often
employed, for example for deriving product-form descriptions of the steady state in
Jackson networks, or for proving diffusion approximations in generalized
Jackson networks. 

A second reason is that independence is a natural assumption in many
applications. Consider an automobile assembly line for example, where it would
make sense to assume that the time to attach the doors is independent of the
time to apply rust protection.

On the other hand, if we consider a manual automobile washing operation, it
seems natural that the main factor influencing service times is the soil level
of the vehicle. A very dirty vehicle will tend to have a longer service time
than others at the first washing station, but probably also at the wheel
washing station and very likely as well at the interior cleaning station.
That is, one would expect a vehicle's random service times at various
stations to be correlated with its soil level and thus to each other. 

Computer and telecommunications networks afford further examples in which jobs
must pass through a series of processing queues (transmission,
decryption, format translation et al.), the random processing time of which
will be correlated to the job's intrinsic size (file size).  Indeed one can
imagine many applications in which the successive service times of a given job
are highly correlated due to some intrinsic property of the job, and this
motivates consideration of models with correlated service times. 

While we are not proposing that the model studied here, with just two nodes
and identical service times is realistic for direct applications (such a model
would allow for a more general network topology and arbitrarily correlated as
opposed to {\em identical} service times), we view it as an archetype for more
realistic models incorporating correlation. It is the simplest possible model
in which the unusual effects of strong service time correlations are laid
bare, and yet it already exhibits the serious difficulties in analyzing such
effects.  Our aim is to demonstrate some useful mathematical tools for dealing
with such difficulties  (adding to the small handful of results that exist for
this model). We also speculate that some of the tools used here may be of use
in analyzing non-queueing models incorporating similar correlation structures,
such as models of world record evolution in improving populations as studied
in \cite{BalleriniResnick}.

The tandem model under consideration was first introduced for
Poisson arrivals in the PhD thesis of O.\ Boxma \cite{boxmathesis} where a
rather complete analysis of the invariant distribution was given, providing a
rare example of a non-product form tandem queueing network for which an
explicit analysis of the downstream queue is possible.

In addition,  this model also shows unusual behavior in heavy traffic. In the
finite variance case it is known \cite{KarpelevichKreinin, kk96} that the
amount of work at the second node is of smaller order than the amount of work
at the first node as the system load $\rho$ (which is identical for both
queues) tends to $1$. For service times with bounded support, it is even shown
in \cite{boxma1978longest} that the expected value of the waiting time in the
second queue is finite for $\rho=1$. The intuition behind these results is
that the amount of work in the first queue is driven by sums, and in the
second queue is driven by maxima, suggesting that both queues should scale
identically when the service times have infinite variance.

This behavior has recently been confirmed in our work \cite{GTZ1}, which is a
prequel to the present study. In \cite{GTZ1} we investigated the behavior of
the workload of the second queue  at embedded time points when the first queue
empties.  It was shown that this embedded Markov chain is sufficiently
tractable, and analytic methods were used to investigate the process limit of
this embedded Markov chain.

In the present paper we take up the task of analyzing the full workload
process at the second node. We seek to prove a scaling limit theorem wherein
the limit process is more tractable than the original workload process and can
therefore serve as an approximation to it.  One challenge is that the workload
process does not converge in heavy traffic.

To see this intuitively, consider that on a space-time scale under which the
successive plateaus of the process converge, there will be asymptotically
infinitely many arrivals in between plateau increases. Under scaling, each
such arrival causes a linear decrease at rate tending to infinity, followed by
an upward jump the same size as the total decrease. The asymptotic result is
oscillations below the level of the plateau that are too wild to converge in
any of the Skorohod topologies. 

We note that this type of behavior has been mentioned in Whitt's monograph
\cite{whitt2002stochastic}, where new spaces ($E$ and $F$) to potentially deal
with such fluctuations have been suggested.  Though an approach using this
framework would be interesting, we take a different approach in the present
paper which is more tailored to the specific model here.  Notice that the
silhouette of the workload in the second queue seems like it might converge in
the usual $J_1$-topology under the
same scaling as the workload in the first queue.  Moreover, much of the
information about the workload in the second queue is retained if we only keep
track of these recurring levels or plateaus, so we don't lose much by working
with just the silhouette. For example, if one is interested in the probability
of the buffer at the second queue exceeding some critical threshold, the
answer is the same for the silhouette. The silhouette also provides an upper
bound for the actual workload at any time, provides information about how
often the second queue is idle, etc. 

In choosing to work with the silhouette, we eliminate the oscillating behavior
that prevents us from working directly with the workload in the second queue,
and gain the ability to prove a limit theorem. 

This is the strategy we follow.  We introduce and study a process $M$, called
the plateau process, which encodes most of the information in the workload
process.  The plateau process is defined to be the workload in the second
queue at the time of the most recent arrival.  This definition eliminates the
difficulty with scaling described above. We show that under an appropriate
scaling, the plateau process converges to a limit $M^*$ that is a
certain function of two independent L\'evy processes $U^*$ and $V^*$.

More explicitly, the $N$th job waits in the second queue for a period of time
$F(U,V,1)(N)$, where $U$ and $V$ are the arrival and service processes for the
model, and for two functions $x,y:[0,\infty)\to\R$,
\begin{equation*}
\begin{split}
F(x,y,c)(t)&=\sup_{0\leq s\leq t}\left(y(s)-y(s-)+\sup_{0\leq r\leq s}\left(x(r)-y([r-c]^+\right)\right)\\
&\qquad\qquad-\left.\left.\sup_{0\leq s\leq t}\right(x(s)-y([s-c]^+)\right).
\end{split}
\end{equation*}
At time $t$ the number of jobs that have arrived to the second queue is
$R(t)$, and the above composition of functions is continuous on a relevant set in the
Skorohod path space $\mathbb D$.  For a sequence of models indexed by $r$, the
plateau process in the $r$th model can be written
\begin{equation*}
M^r(t)=F(U^r,V^r,1)(R^r(t)).
\end{equation*}
Letting $\check M^r(t)=\frac{1}{a_r}M^r(rt)$, we show that
\begin{equation*}
\check M^r\Rightarrow M^*,
\end{equation*}
where $M^*(t)=F(U^*+\gamma\mu e,V^*,0)(t/\mu)$; see Theorem \ref{Main Theorem}
below.

The process appearing in the limit is not Markovian, but a suitable
time-change is shown to be. Our second result provides (for a subset of cases) 
a means of performing some calculations on the limit process $M^*$, by
deriving an explicit formula for the one-dimensional distributions of a
natural time change $\{M^*(\mu L^{-1}(v)),v\ge0\}$ of the process. Here $\mu$
is a constant and $L^{-1}$ is
the inverse local time of a reflected version of the limiting service process
$V^*$, which is an explicit $\alpha$-stable L\'evy process. These
one-dimensional distributions are given for each $v\ge0$ by the distribution
functions
\[ 
  F_v(y)=\exp\left(-\int_y^{y+v}\frac{\kappa(q)}{q}dq\right), \qquad y\ge0,
  \]
where $\kappa$ is an explicit function; see Theorem \ref{thrm2main} in Section
\ref{s.LimitAnalysis}.  This second result also implies that the embedded
Markov chain of the limit process coincides with the limit of the embedded
Markov chains considered in \cite{GTZ1}.

The paper is organized is follows. We first carefully define the model and
scaling, make mild asymptotic assumptions, and state our first result, Theorem
\ref{Main Theorem}. 

The bulk of the paper, Sections \ref{s.Relationships} and \ref{s.Continuity},
is then devoted to the proof, which is essentially an elaborate application of
the continuous mapping theorem.  This is a bit delicate because the relevant
mapping $F$ is not continuous everywhere. In particular, we first show in a
series of steps that $M^r(t)$ can indeed be represented as the composition of
functions $F(U^r,V^r,1)(R^r(t))$ described above.  Then a series of steps
shows that $F$ is continuous on a particular subset of
$\mathbb{D}\times\mathbb{D}\times\mathbb{R}$ (see Lemma \ref{FisContinuous}).
Proving that for the limiting primitive processes $U^*$ and $V^*$, the triple
$(U^*+\gamma\mu e,V^*,0)$ is almost surely in this set enables a final
application of the continuous mapping theorem together with the random time
change theorem.  

Finally, in Section \ref{s.LimitAnalysis} we develop some ideas from excursion
theory to analyze the limit process $M^*$ for a subset of cases (when the
interarrival times have finite second moment). After performing a time
change using a local time derived from $V^*$, the process becomes Markov.  We
are then able to apply some excursion theory results to calculate one
dimensional distributions and relate this process to the limit derived in
\cite{GTZ1}.

\subsection{Notation}  The following notation will be used throughout.  Let
$\N=\{1,2,\ldots\}$ and let $\R$ denote the real numbers.  Let
$\R_{+}=[0,\infty)$.  For $a,b\in\R$, write $a\vee b$ for the maximum, and
  $a\wedge b$ for the minimum, $[a]^+=0\vee a$, $[a]^-=0\vee -a$ , $\lfloor
  a\rfloor$ for the integer part of $a$.  For $f:\R_+\to \R$ let
  $f^\uparrow(t)=\sup_{0\leq s\leq t}f(s)$.

Let $\mathbb{D}=\mathbb{D}([0,\infty),\R)$ be the space of real valued,
  right-continuous functions on $[0,\infty)$ with finite left limits.  We
    endow $\mathbb D$ with the Skorohod $J_1$-topology which makes $\mathbb D$
    a Polish space $\cite{billingsley1968convergence}$.  For $T\geq0$, let
    $\rho_T(x,y)=\sup_{s\in [0,T]}|x(s)-y(s)|$.  Let $e\in \mathbb{D}$ be the
    identity function $e(t)=t$.  For $x\in \mathbb{D}$, let
    $x(t-)=\lim_{s\uparrow t}x(s)$, and let $x^-(t)=x(t-)$ for $t>0$
    and $x^-(0)=x(0)$.

Following Ethier and Kurtz \cite{ethier2009markov} let $\Lambda'$ be the
collection of strictly increasing functions mapping $\R_{+}$ onto $\R_{+}$.
Let $\Lambda\subset \Lambda'$ be the set of Lipschitz continuous functions
such that $\lambda\in \Lambda$ implies $\displaystyle\sup_{s>t\geq
0}\left|\log \frac{\lambda(s)-\lambda(t)}{s-t}\right|<\infty$. 

We will often use \cite{ethier2009markov} Proposition 3.5.3: let
$\{x_n\}\subset \mathbb{D}$ and $x\in \mathbb{D}$.  Then $x_n\xrightarrow{J_1}
x$ if and only if for each $T>0$ there exists $\{\lambda_n\}\subset \Lambda'$
(possibly depending on $T$) such that $\lim_{n\to\infty} \sup_{0\leq t\leq T}
|\lambda_n(t)-t|=0$ and $\lim_{n\to\infty}\sup_{0\leq t\leq T}
|x_n(t)-x(\lambda_n(t))|=0$.

We write $X\sim Y$ if $X$ and $Y$ are equal in distribution.  Weak convergence
of random elements will be denoted by $\Rightarrow$.  We adopt the convention
that a sum of the form $\sum_{i=n}^m$ with $n>m$, or a sum over an empty set
of indices equals zero.

\section{Tandem queue model and main result}
In this section we give a precise description of the tandem queue, specify our assumptions, and state our main result. 

\subsection{Definition of the model}\label{sec:modelNor}
We formulate a model equivalent to the one in Boxma \cite{boxma1979tandem}.
The tandem queueing system consists of two queues Q1 and Q2 in series; both Q1
and Q2 are single-server queues with an unlimited buffer. Jobs enter the
tandem system at Q1. After completion of service at Q1 a job immediately
enters Q2, and when service at Q2, which is the exact same length as
previously experienced in Q1, is completed it leaves the tandem system. Jobs
are served individually and at both servers with the first in first out
discipline.  We assume the system is empty at time zero.

More precisely, at Q1 the {\it exogenous arrival process} $E(\cdot)$ is a
renewal process.  Jump times of this process correspond to times at which jobs
enter the system.  This renewal process is defined from a sequence of
interarrival times $\{u_i\}_{i=1}^\infty$, where $u_1$ denotes the time at
which the first job to arrive after time zero enters the system and $u_i$, $i
\geq 2$, denotes the time between the arrival of the $(i-1)$st and the $i$th
jobs to enter the system after time zero.  Thus, $U_i=\sum_{j=1}^i u_j$ is the
time at which the $i$th arrival enters the system, which is interpreted as
zero if $i = 0$, and $E(t) = \sup\{i \geq 0:U_i\leq t\}$ is the number of
exogenous arrivals by time $t$. We assume that the sequence
$\{u_i\}_{i=1}^\infty$ is an independent and identically distributed sequence
of nonnegative random variables with $\expectation{u_1}=\mu  < \infty$. 

At Q1, the service process, $\{V_i, i = 1, 2, \ldots\}$, is such that $V_i$
records the total amount of service required from the server by the first $i$
arrivals.  More precisely, $\{v_i\}_{i=1}^\infty$ denotes an independent and
identically distributed sequence of strictly positive random variables.  We
interpret $v_i$ as the amount of processing time that the $i$th arrival
requires from both servers.  The $v_i$'s are known as the {\it service times}.
Then, $V_i=\sum_{j=1}^i v_j$, which is taken to be zero if $i = 0$. It is
assumed that $\expectation{v_1}=\nu<\infty$.

For $t\geq 0$, let
\begin{equation*}
I(t)=\sup_{s\leq t}\left[V_{E(s)}-s\right]^-.
\end{equation*}
We interpret $I(t)$ as the cumulative amount of time that the first server has
been idle up to time $t$.  For $n\geq 0$, let
\begin{equation*}
I_n=I(U_n).
\end{equation*}
Then $I_n$ is the cumulative amount of time that the first server has be idle up
to the arrival of the $n$th job in the first queue.

Let $W_i(t)$ denote the (immediate) workload at time $t$ at Q$i$, $i=1,2$,
which is the total amount of time that the server must work in order to
satisfy the remaining service requirement of each job present in the system at
time $t$, ignoring future arrivals.  For $t\geq 0$ we define
\begin{equation*}
W_1(t)=V_{E(t)}-t +I(t).
\end{equation*} 

Let $D_n$ be the {\it transfer time} of the $n$th job.  So, the $n$th job
exits Q1 and enters Q2 at time $D_n$.  Let $d_1=u_1+v_1$ and $d_n=D_n-D_{n-1}$
for $n\geq 2$ be the {\it intertransfer time} between arrivals of the $n-1$st
and $n$th job to the second queue.  For $n\geq 0$ we have
\begin{equation*}
D_n=V_n+I_n.
\end{equation*}

Let $R(t)$ denote the number of transfers to Q2 by time $t$.  For $t\geq 0$ we have
\begin{equation}\label{eq:R}
R(t)=\sup\{n\geq 0:D_n\leq t\}.
\end{equation}

Let $J(t)$ denote the cumulative amount of time that the second server has
been idle up to time $t$, and $W_2(t)$ as the workload in Q2 at time $t$.
That is, for $t\geq 0$ let
\begin{equation*}
\begin{split}
J(t)&=\sup_{s\leq t}\left[V_{R(s)}-s\right]^-,\\
W_2(t)&=V_{R(t)}-t+J(t).
\end{split}
\end{equation*}
If $k$ is the index of the first job in a busy period of the first queue then
$W_1(U_k)=v_k$.  Similarly, $W_2(D_k)=v_k$ if the $k$th job arrives to the
second queue at a time when the second queue is empty.

Finally, let $M_n$ denote the workload in the second queue at the time of the
arrival of the $n$th job to the second queue, which is just the sojourn time
of the $n$th job in the second queue.  Let $M(t)$ be the piecewise constant
right continuous function that agrees with the work load in the second queue
at each transfer time and whose discontinuities are contained in the transfer
times.  We call $M(t)$ the {\it plateau process}.  For integers $n\geq 0$ and
real numbers $t\geq0$ we have
\begin{equation}\label{eq:M_R}
\begin{split}
M_n&=W_2(D_n),\\
M(t)&=M_{R(t)}.
\end{split}
\end{equation}
Finally, we define for  $t\geq 0$, 
\begin{equation}\label{defn:U}
U(t)=U_{\lfloor t \rfloor} \qquad\text{and}\qquad  V(t)=V_{\lfloor t \rfloor}.
\end{equation}

\subsection{Sequence of models, assumptions, and results}\label{sec:modelR} We
now specify a sequence of tandem queueing models indexed by $r\in \R$, where
$r$ increases to $\infty$ through a sequence in $(0,\infty)$.  Each model in
the sequence is defined on the same probability space $(\Omega, \mathcal F,
\mathbb P)$.  The $r$th model in the sequence is defined as in the previous
section where we add a superscript $r$ to each symbol.  In particular, for
$t\geq0$ let $M^r(t)$ denote the plateau process in the $r$th system.

Then $\{v_i^r\}_{i=1}^\infty$ and $\{u_i^r\}_{i=1}^\infty$ are the
service times and interarrival times to the first queue with positive, finite
means $\expectation{v_i^r}=\nu^r$ and $\expectation{u_i^r}=\mu^r$ for each
$i=1,2,\ldots$ independent of each other. Define the following scaled versions
of processes in the $r$th model for a sequence of positive reals
$a_r\to\infty$ and $t\geq0$,
\begin{equation}\label{DiffusionScale}
\begin{array}{lcl}
\bar U^r(t)=r^{-1}U(rt)&  \quad \text{ and }\quad &\bar V^r(t)=r^{-1}V(rt) \\
\check U^r(t)=a_r^{-1}\left(U(rt)-r\mu^r t\right)&  \ \ \  \text{and}\ \ \  &\check V(t)=a_r^{-1}\left(V(rt)-r\nu^r t\right) \\
&{\check M^r(t)=a_r^{-1}M^r(rt)}.
\end{array}
\end{equation}

{\bf Asymptotic assumptions.}  We make the following asymptotic assumptions,
as $r\to\infty$, about our sequence of models.  Assume there is a sequence
$\{a_r\}$ such that $r/a_r\to\infty$, $\check U^r(1)\Rightarrow \mathcal U^*$,
$\check V^r(1)\Rightarrow \mathcal V^*$ in $\R$.  In this case $\mathcal U^*$
and $\mathcal V^*$ are centered infinitely divisible random variables; see
Feller \cite{feller1971introduction} XII.7.  Then we have
$U^r\Rightarrow U^*$ and $V^r\Rightarrow V^*$ in $\mathbb D$, where $U^*$ and
$V^*$ are L\'evy stable motions with $U^*(1)\sim \mathcal U^*$ and
$V^*(1)\sim\mathcal V^*$; see \cite{whitt2002stochastic} supplement 2.4.1.  We
further assume $\lim_{r\to\infty} \mu^r=\lim_{r\to\infty} \nu^r=\mu$ and the
traffic intensity parameter for the $r$th system $\rho^r=\frac{\mu^r}{\nu^r}$
satisfies
\begin{equation*}
\frac{r}{a_r}\left(1-\rho^r\right)\to \gamma\in \R.
\end{equation*}

\begin{defn}\label{defn:F}
Define the mapping $F:\mathbb{D}\times \mathbb{D}\times \R\to \mathbb{D}$ by 
\begin{equation*}
\begin{split}
F(x,y,c)(t)&=\sup_{0\leq s\leq t}\left(y(s)-y(s-)+\sup_{0\leq r\leq s}\left(x(r)-y([r-c]^+\right)\right)\\
&\qquad\qquad-\left.\left.\sup_{0\leq s\leq t}\right(x(s)-y([s-c]^+)\right)
\end{split}
\end{equation*}
\end{defn}
The following is the main result of the paper. 
\begin{thm}\label{Main Theorem} As $r\to\infty$, 
\begin{equation*}
\check M^r\Rightarrow M^*,
\end{equation*}
where $M^*(t)=F(U^*+\gamma \mu e,V^*,0)(t/\mu)$.
\end{thm}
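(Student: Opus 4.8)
The plan is to realize $\check M^r$ as a continuous image of the scaled primitive processes and then invoke the continuous mapping theorem together with the random time change theorem. First I would establish the pathwise (deterministic) representation $M^r(t)=F(U^r,V^r,1)(R^r(t))$, with $U^r,V^r$ as in \eqref{defn:U} and $R^r$ as in \eqref{eq:R}. This follows from the Lindley-type recursion for the sojourn times in the second queue, $M_n=v_n+[M_{n-1}-(D_n-D_{n-1})]^+$, together with the identity $D_n=V_n+I_n$ and the explicit expression for the first-queue idle sequence $\{I_n\}$ in terms of $U$ and $V$. Unrolling the recursion writes $M_n$ as a difference of two running maxima; substituting $D_n=V_n+I_n$ and composing with the right-continuous generalized inverse $R^r$ produces exactly $F(U^r,V^r,1)(R^r(t))$, with the term $y([r-c]^+)$ at $c=1$ accounting for the idle time and the jump term $y(s)-y(s-)$ for the plateau increments $v_n$.

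Next I would record the elementary invariances of $F$ of Definition \ref{defn:F}: positive homogeneity $F(\beta x,\beta y,c)=\beta F(x,y,c)$ for $\beta>0$; the time-change identity $F(x\circ\phi_\alpha,y\circ\phi_\alpha,c)=F(x,y,\alpha c)\circ\phi_\alpha$ for $\phi_\alpha(s)=\alpha s$; and the fact that replacing $(x,y)$ by $(\beta e+x,\beta e+y)$ changes $F(x,y,c)$ by a function bounded in sup norm by $2|\beta|c$. Applying these with $U^r(rt)=r\mu^r t+a_r\check U^r(t)$ and $V^r(rt)=r\nu^r t+a_r\check V^r(t)$ gives, for each $r$,
\begin{equation*}
\check M^r(t)=F\left(\tfrac{r(\mu^r-\nu^r)}{a_r}\,e+\check U^r,\ \check V^r,\ \tfrac1r\right)\left(\bar R^r(t)\right)+o(1),
\end{equation*}
uniformly on compact $t$-sets, where $\bar R^r(t)=r^{-1}R^r(rt)$ and the error is controlled because here $|\beta|c=\nu^r/a_r\to0$. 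Since $\{u_i^r\}$ and $\{v_i^r\}$ are independent of one another, the hypotheses give $(\check U^r,\check V^r)\Rightarrow(U^*,V^*)$ with $U^*$ and $V^*$ independent; as $\tfrac{r(\mu^r-\nu^r)}{a_r}\to\gamma\mu$ and $\tfrac1r\to0$, the triple inside $F$ converges in $\mathbb D\times\mathbb D\times\R$ to $(U^*+\gamma\mu e,V^*,0)$. For the time change, $\bar U^r\Rightarrow\mu e$ and $\bar V^r=\nu^r e+(a_r/r)\check V^r\Rightarrow\mu e$ because $a_r/r\to0$; standard fluid-limit arguments, using that the first queue is in heavy traffic so that its idle process is negligible at fluid scale, then give $\bar D^r\Rightarrow\mu e$ (with $\bar D^r(t)=r^{-1}D^r_{\lfloor rt\rfloor}$) and hence, by inversion, $\bar R^r\Rightarrow e/\mu$, a deterministic continuous strictly increasing limit.

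The final step is an application of Lemma \ref{FisContinuous}, which asserts continuity of $F$ on a set $\mathcal C\subset\mathbb D\times\mathbb D\times\R$, once one knows that $(U^*+\gamma\mu e,V^*,0)\in\mathcal C$ almost surely. The continuous mapping theorem then gives $F(\tfrac{r(\mu^r-\nu^r)}{a_r}e+\check U^r,\check V^r,\tfrac1r)\Rightarrow F(U^*+\gamma\mu e,V^*,0)$ in $\mathbb D$; joint convergence with $\bar R^r\Rightarrow e/\mu$ is automatic because that limit is deterministic, so the random time change theorem yields the composition, and together with the $o(1)$ term this gives $\check M^r\Rightarrow F(U^*+\gamma\mu e,V^*,0)(\cdot/\mu)=M^*$, as claimed in Theorem \ref{Main Theorem}.

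The main obstacle is precisely this last step. The trouble is that $F$ mixes $J_1$-continuous operations (running suprema, composition) with the maps $y\mapsto(s\mapsto y(s)-y(s-))$ and $y\mapsto y([\cdot-c]^+)$, which are not $J_1$-continuous in general: a $J_1$-convergent sequence need not reproduce the jump locations of its limit, and a supremum of $x(r)-y([r-c]^+)$ may be attained at a discontinuity of $y$. Thus $\mathcal C$ must rule out common jumps of $x$ and $y$, ties among the relevant argmaxes, and argmaxes located at jumps of $y$, and it must accommodate the passage $c\to0$; verifying $(U^*+\gamma\mu e,V^*,0)\in\mathcal C$ then rests on the corresponding sample-path properties of two independent Lévy processes — almost surely no common jumps, almost surely unique locations of the running maxima, and the relevant argmax being a continuity point of $V^*$. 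A lesser difficulty is establishing the fluid-scale negligibility of the first-queue idle process used for $\bar R^r\Rightarrow e/\mu$.
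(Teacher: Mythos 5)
Your proposal follows essentially the same route as the paper: the Lindley/idleness representation $M^r=F(U^r,V^r,1)\circ R^r$, the scaling identities reducing $\check M^r$ to $F$ evaluated at the centered, scaled triple (the paper keeps the shift term $a_r^{-1}\nu^r(1\wedge re)$ exactly inside the first argument of $F$ rather than as an $o(1)$ error, which is equivalent), the fluid limit $\bar R^r\Rightarrow e/\mu$, and the continuous mapping plus random time change theorems after verifying that $(U^*+\gamma\mu e,V^*,0)$ lies a.s.\ in the continuity set of $F$. The one substantive difference is that the paper's continuity set (Lemma \ref{FisContinuous}) requires only $\text{Disc}(x)\cap\text{Disc}(y)=\varnothing$ and that $y$ have no negative jumps, handled via a realignment of time changes (Lemma \ref{Realign}); the additional argmax-uniqueness and argmax-location conditions you anticipate are not needed, which spares you the delicate task of showing that the supremum of $U^*+\gamma\mu e-V^*$ is a.s.\ not approached at a jump of $V^*$.
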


\section{The plateau process as a function of $U$ and
$V$}\label{s.Relationships}
In this section we derive various relationships between the stochastic
processes comprising the tandem queueing model.  These relationships hold for
any of the $r$ indexed models, so we suppress superscripts referring to a
particular model in sequence.

\subsection{The idleness process for the first queue}
This section is a prerequisite for understanding the arrival process in the
second queue.  If the cumulative idleness in the first queue is identically
zero for all time, then the arrival process to the second queue is just a
renewal process formed by the service times.  Here we consider the cumulative
idleness process in the first queue as a discrete time process.  Consider the
model defined in section \ref{sec:modelNor}.

\begin{lem}\label{lem:expressI_n} For each $n\geq 1$,
\begin{equation}\label{eq:expressI_n}
I_n=u_1+\max_{k=1}^{n}\left(\sum_{j=2}^k(u_j-v_{j-1})\right),
\end{equation}
for $n=1,2,\ldots$
\end{lem}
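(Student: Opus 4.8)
The plan is to derive a one-step recursion for the sequence $\{I_n\}$ directly from the running-supremum definition of the idleness process and then solve it. Recall $I_n=I(U_n)$ with $I(t)=\sup_{s\le t}[V_{E(s)}-s]^-=\sup_{s\le t}[s-V_{E(s)}]^+$, and that $t\mapsto I(t)$ is nondecreasing. Hence for $n\ge 2$,
\[
I_n=\max\Bigl(I_{n-1},\ \sup_{U_{n-1}<s\le U_n}[s-V_{E(s)}]^+\Bigr).
\]
First I would observe that on the interval $(U_{n-1},U_n)$ the process $E$ is constant and equal to $n-1$ (since $U_{n-1}\le s<U_n$ and the $U_i$ are nondecreasing), so that $[s-V_{E(s)}]^+=[s-V_{n-1}]^+$ there. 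This is a continuous nondecreasing function of $s$, so its supremum over $(U_{n-1},U_n)$ equals $[U_n-V_{n-1}]^+$; and appending the endpoint $s=U_n$ changes nothing, because $V_n\ge V_{n-1}$ (service times are strictly positive) gives $[U_n-V_n]^+\le[U_n-V_{n-1}]^+$. Since $I_{n-1}\ge 0$, this yields $I_n=\max(I_{n-1},[U_n-V_{n-1}]^+)=\max(I_{n-1},\,U_n-V_{n-1})$ for $n\ge 2$.

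Next I would compute the base case directly: for $s\in[0,u_1)$ one has $E(s)=0$ and $[s-V_0]^+=s$, while $[u_1-V_1]^+=[u_1-v_1]^+\le u_1$, so $I_1=I(u_1)=u_1=U_1-V_0$. Unrolling the recursion from this base gives $I_n=\max_{1\le k\le n}(U_k-V_{k-1})$. Finally, $U_k-V_{k-1}=\sum_{j=1}^k u_j-\sum_{j=1}^{k-1}v_j=u_1+\sum_{j=2}^k u_j-\sum_{j=2}^k v_{j-1}=u_1+\sum_{j=2}^k(u_j-v_{j-1})$, so
\[
I_n=u_1+\max_{1\le k\le n}\sum_{j=2}^k(u_j-v_{j-1}),
\]
which is \eqref{eq:expressI_n}. (Equivalently, one can quote the classical Lindley recursion for the waiting times of $Q1$ and translate it via $W_1(U_n-)=V_{n-1}-U_n+I_n$; the argument above is just the pathwise Skorohod reflection identity evaluated at the embedded arrival epochs.)

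The only step requiring genuine care is the handling of the running supremum at the arrival instants: that $E$ is constant on $(U_{n-1},U_n)$, that the supremum of the nondecreasing map $s\mapsto[s-V_{n-1}]^+$ over a half-open interval is attained in the limit at its right endpoint, and that including the upward jump at $U_n$ is harmless because the residual work can only drop across an arrival. If one allows interarrival times to vanish, some of the intervals $(U_{n-1},U_n)$ degenerate; this case is covered by the same telescoping reasoning (or one simply notes the identity is invariant under imposing $u_i>0$, which entails no loss of generality). Everything else is routine bookkeeping.
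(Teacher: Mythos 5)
Your proof is correct, but it follows a genuinely different route from the paper's. The paper proves \eqref{eq:expressI_n} by induction on $n$, splitting into two cases according to whether the $(n+1)$st arrival occurs before or after the $n$th service completes, and arguing verbally about busy periods to decide whether the cumulative idleness increments. You instead work directly from the formal definition $I(t)=\sup_{s\le t}\left[V_{E(s)}-s\right]^-$, decompose the running supremum over the successive interarrival intervals (on each of which $E$ is constant), and obtain the one-step recursion $I_n=\max\left(I_{n-1},\,U_n-V_{n-1}\right)$, which unrolls to $I_n=\max_{1\le k\le n}\left(U_k-V_{k-1}\right)$ and then telescopes into \eqref{eq:expressI_n}. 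Your version has two advantages: it is anchored entirely in the definitions given in Section 2.1 (the paper's case analysis invokes the identity of ``the first job in the current busy period'' and the resulting idleness increment without deriving these from the definition of $I$), and it produces as an intermediate step exactly the identity $I_n=\max_{k=1}^{n}\left(U(k)-V(k-1)\right)$ that the paper re-derives separately in the proof of Lemma \ref{lem:I_n=H}. The paper's induction, on the other hand, stays closer to the queueing intuition. The only points in your argument needing care --- the endpoint behaviour of the supremum at the arrival instants and the possibility of zero interarrival times --- are handled adequately: including $s=U_n$ is harmless since $V_{E(U_n)}\ge V_{n-1}$, and when $U_{n-1}=U_n$ the term $U_n-V_{n-1}$ is dominated by $U_{n-1}-V_{n-2}\le I_{n-1}$, so the recursion is unaffected.
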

\begin{proof}
We proceed by induction.  First observe that $\sum_{j=2}^1(u_j-v_{j-1})=0$, by convention, so
\begin{equation*}
\max_{k=1}^{n}\left(\sum_{j=2}^k(u_j-v_{j-1})\right)\geq 0
\end{equation*}
 for $n\geq 1$.
$\displaystyle I_1=u_1+\max_{k=1}^1\sum_{j=1}^k(u_j-v_{j-1})=u_1$. For $n=2$,
\begin{equation*}
I_2=u_1+\left[u_2-v_1\right]^+=u_1+\max_{k=1}^{2}\left(\sum_{j=2}^k(u_j-v_{j-1})\right),
\end{equation*}
since there is no additional idleness if the second job arrives while the first job is in service.  This is the base case for the induction.

For the inductive step, assume equation \eqref{eq:expressI_n} holds for $n\geq
2$.  There are two cases.  In the first case the $(n+1)$st job arrives before
the $n$th service is complete.  In this case the first job in the current busy
period had index $i\leq n$, arrived at time $t_i$, and the total amount of
work that has arrived since $t_i$, $\sum_{k=i}^n v_k$ exceeds the amount of
time $\sum_{k=i+1}^{n+1}u_{k}$ since $t_i$.  That is,
\begin{equation*}
\sum_{k=i+1}^{n+1}u_{k}-v_{k-1}< 0,
\end{equation*}
for some $i\leq n$.  Thus
\begin{equation*}
\max_{k=1}^{n+1}\left(\sum_{j=2}^k(u_j-v_{j-1})\right)=\max_{k=1}^{n}\left(\sum_{j=2}^k(u_j-v_{j-1})\right),
\end{equation*}
and the cumulative idle time has not increased 
\begin{equation*}
\displaystyle I_n=I_{n+1}=u_1+\max_{k=1}^{n+1}\left(\sum_{j=2}^k(u_j-v_{j-1})\right).
\end{equation*}

In the second case, the $(n+1)$st job arrives after the $n$th service is
complete, so the total idle time just before the arrival of the $n+1$ job is
$u_1+\sum_{k=2}^{n+1}u_k-v_{k-1}$.  In this case, for any job $i\leq n$, the
total amount of time $\sum_{k=i+1}^{n+1} u_k$ exceeds the total amount of work
$\sum_{k=i}^n v_k$ since $t_i$.  That is,
\begin{equation*}
\sum_{k=i+1}^{n+1}u_{k}-v_{k-1}\geq 0.
\end{equation*}
Thus, 
\begin{equation*}
\left(\sum_{j=2}^k(u_j-v_{j-1})\right)\leq \left(\sum_{j=2}^{n+1}(u_j-v_{j-1})\right)
\end{equation*}
for each $k=2, \ldots, n+1$, and we have $\displaystyle \sum_{j=2}^{n+1}u_j-v_{j-1}=\max_{k=1}^{n+1}\left(\sum_{j=2}^k(u_j-v_{j-1})\right).$
\end{proof}

Note that the departure process of the first queue is equal to the arrival
process $R(\cdot)$ of the second queue.  Since the queueing discipline is
FIFO, the number of jobs that have arrived to the second queue by time $t$ is
the greatest number $N$ such that the total amount of time needed to complete
the first $N$ jobs, $\sum_{k=1}^N v_k$, is less than the amount of time spent
working, $t$ minus the cumulative idle time in the first queue.

\subsection{Workload in the second queue}
In this section we show how to write the plateau process $M(\cdot)$ as a
function of the primitive arrival and service processes.  The following
formula relates sojourn times in the second queue to service times and
idleness in the first queue.  It comes from Lindley recursion
\cite{asmussen1987applied} for a FIFO queue
$W_2(D_{n+1})=v_{n+1}+[W_2(D_n)-d_{n+1}]^+$, where no independence needs to be
assumed about the intertransfer times $d_k$ and service times $v_k$.

\begin{lem}\label{lem:M_nForm}
The sojourn time of the $n^{th}$ job in the second queue is
\begin{equation*}
M_n=\max_{k=1}^n\left\{v_k+I_k\right\}-I_n.
\end{equation*}
\end{lem}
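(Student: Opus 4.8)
The plan is to prove the identity by induction on $n$, unwinding the Lindley recursion $M_{n+1}=v_{n+1}+[M_n-d_{n+1}]^+$ (with $M_n=W_2(D_n)$) quoted just before the lemma, together with the structural identity $D_n=V_n+I_n$ from the model definition. The base case is $n=1$: since the first job to reach Q2 finds it empty, $M_1=W_2(D_1)=v_1$, which matches $\max_{k=1}^1\{v_k+I_k\}-I_1=v_1$. No property of the idleness sequence beyond $D_n=V_n+I_n$ is needed (in particular monotonicity of $\{I_n\}$ is not used, though it holds).

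For the inductive step, first rewrite the intertransfer time in terms of the primitives: from $D_n=V_n+I_n$ we get, for $n\ge1$,
\[
d_{n+1}=D_{n+1}-D_n=v_{n+1}+(I_{n+1}-I_n).
\]
Assume $M_n=\max_{k=1}^n\{v_k+I_k\}-I_n$. Substituting this and the line above into the recursion, the quantity inside the positive part becomes $M_n-d_{n+1}=\max_{k=1}^n\{v_k+I_k\}-(v_{n+1}+I_{n+1})$. Using $[a]^+=0\vee a$ and the elementary identity $\alpha+\big((\beta-\alpha-c)\vee 0\big)=(\beta-c)\vee\alpha$ (valid for all real $\alpha,\beta,c$) with $\alpha=v_{n+1}$, $\beta=\max_{k=1}^n\{v_k+I_k\}$, $c=I_{n+1}$, we obtain
\[
M_{n+1}=v_{n+1}+\Big(\big(\textstyle\max_{k=1}^n\{v_k+I_k\}-(v_{n+1}+I_{n+1})\big)\vee 0\Big)=\Big(\textstyle\max_{k=1}^n\{v_k+I_k\}-I_{n+1}\Big)\vee v_{n+1}.
\]
Finally, writing $v_{n+1}=(v_{n+1}+I_{n+1})-I_{n+1}$ and pulling the common summand $-I_{n+1}$ out of the maximum yields $M_{n+1}=\max_{k=1}^{n+1}\{v_k+I_k\}-I_{n+1}$, completing the induction.

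I do not expect a serious obstacle: the only care required is the bookkeeping of the increment $I_{n+1}-I_n$ when converting from intertransfer times $d_{n+1}$ to service times $v_{n+1}$, and the elementary rearrangement of the maximum. An alternative route would be to first solve the Lindley recursion in closed form, $M_n=\max_{1\le k\le n}\big(\sum_{j=k}^n v_j-\sum_{j=k+1}^n d_j\big)$, and then telescope $\sum_{j=k+1}^n d_j=D_n-D_k$ and substitute $D_j=V_j+I_j$ together with $\sum_{j=k}^n v_j=V_n-V_{k-1}$; each term then simplifies to $v_k+I_k-I_n$, giving the same result. The direct induction is cleaner, so that is what I would carry out.
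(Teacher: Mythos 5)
Your proposal is correct and follows essentially the same route as the paper: induction on $n$ with base case $M_1=v_1$, the substitution $d_{n+1}=v_{n+1}+(I_{n+1}-I_n)$ into Lindley's recursion, and the same rearrangement $v_{n+1}+[M_n-d_{n+1}]^+=\left[(v_{n+1}+I_{n+1})\vee\max_{k=1}^n(v_k+I_k)\right]-I_{n+1}$. The argument is sound as written.
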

\begin{proof}
Note that the sojourn time of the $n^{th}$ job includes its service time.  The
second queue is initially empty and the service time of the $n$th job is the
same in both queues.  Clearly $I_1=u_1$, since the first queue is empty until
the arrival of the first job.  So,
\begin{equation*}
M_1=v_1=\max_{k=1}^1\left\{v_k+I_k\right\}-I_1.
\end{equation*}
The intertransfer time between the $n$th and $(n+1)$st job is
$d_{n+1}=v_{n+1}+(I_{n+1}-I_n)$.  Proceeding by induction, suppose
$\displaystyle M_n=\max_{k=1}^n\left\{v_k+I_k\right\}-I_n.$  Then, Lindley
recursion gives
\begin{equation*}
\begin{split}
M_{n+1}&=v_{n+1}+\left[M_n-v_{n+1}-\left(I_{n+1}-I_n\right)\right]^+\\
&=v_{n+1}\vee \left( M_n-\left(I_{n+1}-I_n\right)\right)\\
&=v_{n+1}\vee \left(\max_{k=1}^n\left(v_k+I_k\right)-I_n-\left(I_{n+1}-I_n\right)\right)\\
&=\left[\left(v_{n+1}+I_{n+1}\right)\vee \max_{k=1}^n\left(v_k+I_k\right)\right]-I_{n+1}\\
&=\max_{k=1}^{n+1}\left(v_k+I_k\right)-I_{n+1}.
\end{split}
\end{equation*}
\end{proof}

\begin{defn}\label{defn:GH}
Define the translation function $G:\mathbb{D}\times \R\to \mathbb{D}$ by 
\begin{equation*}
G(x,c)(t)=x([t-c]^+),
\end{equation*}
and define $H:\mathbb{D}\times \mathbb{D}\times \mathbb R_{+}\to \mathbb{D}$
as the composition 
\begin{equation*}
H(x,y,c)=\left(x-G(y,c)\right)^\uparrow.
\end{equation*}
More explicitly,
\begin{equation*}
H(x,y,c)(t)=\left.\left.\sup_{0\leq s\leq t}\right(x(s)-y([s-c]^+)\right).
\end{equation*}
\end{defn}

We can write $I_n$ in terms of $V$ and $U$ from \ref{defn:U}.  
\begin{lem}\label{lem:I_n=H} For each $n\geq 1$,
\begin{equation*}
I_n=H(U,V,1)(n),
\end{equation*}
Moreover $H$ is constant on intervals of the form $[n,n+1)$ where $n$ is an
  integer, so for each integer $n$ we have $H(U,V,n)(\lfloor
  t\rfloor)=H(U,V,n)(t)$ for all $t\geq 0$.
\end{lem}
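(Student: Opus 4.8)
The plan is to start from the expression for $I_n$ established in Lemma \ref{lem:expressI_n}, namely $I_n = u_1 + \max_{k=1}^n\left(\sum_{j=2}^k (u_j - v_{j-1})\right)$, and rewrite the inner sum in terms of the cumulative processes $U$ and $V$ defined in \eqref{defn:U}. Observe that $\sum_{j=2}^k u_j = U_k - U_1 = U_k - u_1$ and $\sum_{j=2}^k v_{j-1} = \sum_{i=1}^{k-1} v_i = V_{k-1}$. Hence the inner sum equals $U_k - u_1 - V_{k-1}$, and substituting back gives $I_n = u_1 + \max_{k=1}^n (U_k - u_1 - V_{k-1}) = \max_{k=1}^n (U_k - V_{k-1})$. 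The remaining task is to recognize this maximum over integer indices as the supremum $H(U,V,1)(n) = \sup_{0 \le s \le n}\left(U(s) - V([s-1]^+)\right)$.

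The second step is therefore to identify $\max_{k=1}^n (U_k - V_{k-1})$ with $\sup_{0 \le s \le n}\left(U(\lfloor s\rfloor) - V([s-1]^+)\right)$. Here I would use the fact that $U$ and $V$ are piecewise-constant, right-continuous step functions that jump only at integers: $U(s) = U_{\lfloor s\rfloor}$ and $V(s) = V_{\lfloor s\rfloor}$. For $s \in [k, k+1)$ with $k \ge 1$ an integer, we have $U(s) = U_k$ and, since $[s-1]^+ \in [k-1, k)$, we have $V([s-1]^+) = V_{k-1}$; thus $U(s) - V([s-1]^+) = U_k - V_{k-1}$ is constant on that interval. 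For $s \in [0,1)$ we get $U(s) - V([s-1]^+) = U_0 - V_0 = 0$, which is dominated by the $k=1$ term $U_1 - V_0 = u_1 \ge 0$. Taking the supremum over $s \in [0,n]$ therefore picks up exactly the values $U_k - V_{k-1}$ for $k = 1, \ldots, n$ (the endpoint $s=n$ contributes $U_n - V_{n-1}$, already included), giving $H(U,V,1)(n) = \max_{k=1}^n (U_k - V_{k-1}) = I_n$, which is the first claim.

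For the ``moreover'' part, I would argue directly that for any integer $c$ the function $s \mapsto H(U,V,c)(s) = \sup_{0 \le \sigma \le s}\left(U(\lfloor\sigma\rfloor) - V([\sigma - c]^+)\right)$ is constant on each interval $[n, n+1)$. The integrand $\sigma \mapsto U(\lfloor\sigma\rfloor) - V([\sigma-c]^+)$ is itself piecewise constant with possible jumps only at integers (both $\lfloor\sigma\rfloor$ and $[\sigma-c]^+$, being floor-type or max-with-constant shifts of $\sigma$ evaluated through step functions, change only at integer arguments). Hence the supremum of this integrand over $[0,s]$ can only change when $s$ crosses an integer, so $H(U,V,c)$ is constant on $[n,n+1)$; consequently $H(U,V,c)(\lfloor t\rfloor) = H(U,V,c)(t)$ for all $t \ge 0$, since $\lfloor t\rfloor$ and $t$ lie in the same such interval. (Strictly, one should note that although the integrand at the single point $\sigma = n$ could jump upward, the supremum over the half-open interval $[0,s]$ for $s \in [n, n+1)$ includes $\sigma = n$, so no discrepancy arises; right-continuity of $H$ then makes this precise.)

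I do not anticipate a serious obstacle here — the lemma is essentially a bookkeeping translation between the discrete indexing and the step-function (càdlàg) representation. The one point requiring minor care is the off-by-one alignment in the shift: confirming that $V([s-1]^+) = V_{k-1}$ precisely when $U(s) = U_k$, and handling the boundary interval $[0,1)$ where $[s-1]^+ = 0$, so that the shifted argument does not go negative. Verifying that the $s \in [0,1)$ contribution is subsumed by the $k=1$ term (using $u_1 \ge 0$) closes that gap.
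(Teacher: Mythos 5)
Your proof is correct and follows essentially the same route as the paper: rewrite $I_n$ from Lemma \ref{lem:expressI_n} as $\max_{k=1}^n(U_k-V_{k-1})$ by absorbing $u_1$, then identify this with $\sup_{0\le s\le n}\bigl(U(s)-V([s-1]^+)\bigr)$ using the fact that $U$ and $V$ are step functions jumping only at integers. Your treatment of the boundary interval $[0,1)$ and of the ``moreover'' claim is in fact slightly more careful than the paper's own one-line justification.
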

\begin{proof}
The processes $V$ and $U$ are constant between integers so $H$ is constant on
intervals of the form $[n,n+1)$, where $n$ is an integer.  For an integer $k$,
  $v_k=V(k)-V(k-)$ and $u_k=U(k)-U(k-)$.  By lemma \ref{lem:expressI_n},
\begin{equation*}
\begin{split}
I_{n}&=u_1+\max_{k=1}^{n}\left(\sum_{j=2}^k(u_j-v_{j-1})\right)\\
&=u_1 +\max_{k=1}^{n}\left(\sum_{j=2}^ku_j-\sum_{j=1}^{k-1}v_j\right)\\
&=\max_{k=1}^{n}\left(\sum_{j=1}^ku_j-\sum_{j=1}^{k-1}v_j\right)\\
&=\max_{k=1}^{n}\left(U(k)-V(k-1)\right)\\
&=\sup_{0\leq s\leq n}\left(U(s)-V([s-1]^+)\right)\\
&=\sup_{0\leq s\leq n}\left(U(s)-G(V,1)(s)\right)\\
&=H(U,V,1)(n).
\end{split}
\end{equation*}
\end{proof}

Now we can write $R$ in terms of $U$ and $V$.
\begin{coro}\label{coro:RwithH}
\begin{equation*}
R(t)=\max\left\{m\geq 0:V(m)+H(U,V,1)(m)\leq t\right\}.
\end{equation*}
\end{coro}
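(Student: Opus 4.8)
The plan is to reduce the claim to a direct substitution into the definition of $R$, using the previously established formulas for $D_n$ and $I_n$. Recall from \eqref{eq:R} that
\[
R(t)=\sup\{n\geq 0:D_n\leq t\},
\]
and that the model definition gives $D_n=V_n+I_n$ for $n\geq 0$. I would first observe that by the definition \eqref{defn:U}, $V(m)=V_{\lfloor m\rfloor}=V_m$ for every integer $m\geq 0$, and that by Lemma \ref{lem:I_n=H} we have $I_m=H(U,V,1)(m)$ for every integer $m\geq 1$; the case $m=0$ is handled separately by noting $I_0=I(U_0)=I(0)=0$ and $H(U,V,1)(0)=U(0)-V([{-1}]^+)=0=V(0)$, so that $D_0=0=V(0)+H(U,V,1)(0)$. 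Consequently, for every integer $m\geq 0$,
\[
D_m=V_m+I_m=V(m)+H(U,V,1)(m),
\]
so the two index sets $\{m\geq 0:D_m\leq t\}$ and $\{m\geq 0:V(m)+H(U,V,1)(m)\leq t\}$ coincide exactly, and hence so do their suprema.

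It then remains only to justify replacing $\sup$ by $\max$, i.e.\ that the supremum is attained. For this I would note that the transfer times are strictly increasing: since $I_n$ is nondecreasing in $n$ and $d_{n+1}=v_{n+1}+(I_{n+1}-I_n)\geq v_{n+1}>0$, we have $D_{n+1}>D_n$ for all $n$, with $D_0=0$ and $D_n\to\infty$ as $n\to\infty$. Therefore, for each fixed $t\geq 0$ the set $\{m\geq 0:D_m\leq t\}$ is nonempty (it contains $0$) and finite, so its supremum is a maximum, which yields the stated identity.

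There is essentially no substantive obstacle here; the result is a bookkeeping consequence of \eqref{eq:R}, the identity $D_n=V_n+I_n$, and Lemma \ref{lem:I_n=H}. The only points requiring a word of care are the boundary case $m=0$ (so that the index sets agree on the nose rather than up to the inclusion of a trivial element) and the monotonicity argument ensuring the supremum is attained; both are routine.
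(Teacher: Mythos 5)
Your proof is correct and follows essentially the same route as the paper's: substitute $D_m=V_m+I_m$ into the definition of $R$ and invoke Lemma \ref{lem:I_n=H} together with the identification $V(m)=V_m$. You are merely more careful than the paper about the $m=0$ boundary case and about why the supremum is attained; both points are fine.
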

\begin{proof}
From Definition \eqref{eq:R} we have $R(t)=\max\{N\geq 0: \sum_{k=1}^N
v_k+I_N\leq t\}$.  We have $\sum_{k=1}^N v_k=V(N)$ by Definition
\ref{defn:U} and $I_N=H(U,V,1)(N)$ by Lemma \ref{lem:I_n=H}.
\end{proof}

We can now write the plateau process in terms of the function $F$ defined in
section \ref{sec:modelR}.  By Definitions \ref{defn:F} and \ref{defn:GH},
\begin{equation*}
F(x,y,c)=\left(y-y^-+H(x,y,c)\right)^\uparrow-H(x,y,c),
\end{equation*}
or more explicitly,
\begin{equation*}
F(x,y,c)(t)=\sup_{0\leq s\leq t}\left(y(s)-y(s-)+H(x,y,c)(s)\right)-H(x,y,c)(t).
\end{equation*}

\begin{lem}\label{lem:M=F}  For all $t\geq 0$,
\begin{equation*}
M_{\lfloor t\rfloor}=F(U,V,1)(t).
\end{equation*}
\end{lem}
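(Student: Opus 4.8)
The plan is to evaluate the supremum defining $F(U,V,1)(t)$ explicitly and match it against the formula for $M_n$ in Lemma \ref{lem:M_nForm}. Recall the expanded form
\[
F(U,V,1)(t)=\sup_{0\le s\le t}\bigl(V(s)-V(s-)+H(U,V,1)(s)\bigr)-H(U,V,1)(t).
\]
By \eqref{defn:U} the process $V$ is piecewise constant with jumps only at positive integers, so $V(s)-V(s-)=0$ for non-integer $s$, $V(k)-V(k-)=v_k$ for $k\in\N$, and $V(0)-V(0-)=0$. By Lemma \ref{lem:I_n=H}, $H(U,V,1)$ is constant on each interval $[n,n+1)$ with value $I_n$ there, so $H(U,V,1)(t)=I_{\lfloor t\rfloor}$ and $H(U,V,1)(k)=I_k$ for integers $k$.

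First I would show that the supremum above is attained at an integer argument. On $[0,t]$ the supremand $V(s)-V(s-)+H(U,V,1)(s)$ equals $v_k+I_k$ at each integer $s=k\le t$ and equals $I_{\lfloor s\rfloor}$ at every non-integer $s$; since the service times are positive and $(I_n)$ is nondecreasing, both families of values are dominated by the finite maximum $\max_{0\le k\le\lfloor t\rfloor}(v_k+I_k)$, which is itself attained within $[0,t]$. Hence, for $t\ge1$,
\[
\sup_{0\le s\le t}\bigl(V(s)-V(s-)+H(U,V,1)(s)\bigr)=\max_{0\le k\le\lfloor t\rfloor}\bigl(v_k+I_k\bigr)=\max_{1\le k\le\lfloor t\rfloor}\bigl(v_k+I_k\bigr),
\]
the last equality holding because $v_0+I_0=0\le v_1+I_1$. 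Subtracting $H(U,V,1)(t)=I_{\lfloor t\rfloor}$ and invoking Lemma \ref{lem:M_nForm} then yields $F(U,V,1)(t)=\max_{1\le k\le\lfloor t\rfloor}(v_k+I_k)-I_{\lfloor t\rfloor}=M_{\lfloor t\rfloor}$. For the remaining range $t\in[0,1)$ I would just check both sides directly: $M_{\lfloor t\rfloor}=M_0=W_2(D_0)=W_2(0)=0$, while $V(s)-V(s-)=H(U,V,1)(s)=0$ for every $s\in[0,t]$, so $F(U,V,1)(t)=0$ as well.

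I do not expect a substantive obstacle; the argument is essentially bookkeeping. The only points needing care are the behavior of the left-limit term $V(s)-V(s-)$ at integer versus non-integer arguments, the status of the partial final interval $(\lfloor t\rfloor,t]$ when $t$ is not an integer (handled by the monotonicity of the idleness sequence), and the degenerate case $t<1$, where the relevant maximum is over an empty index set and must be treated separately.
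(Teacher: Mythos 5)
Your proposal is correct and follows essentially the same route as the paper: both start from the formula of Lemma \ref{lem:M_nForm}, substitute $I_k=H(U,V,1)(k)$ via Lemma \ref{lem:I_n=H}, and observe that $V(s)-V(s-)+H(U,V,1)(s)$ is maximized at integer arguments because $H(U,V,1)$ is constant on $[k,k+1)$ and the jump term vanishes off the integers. Your explicit treatment of the degenerate case $t\in[0,1)$ is a small point of extra care that the paper leaves to its empty-index convention.
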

\begin{proof}
By lemma \ref{lem:M_nForm}
\begin{equation*}
\begin{split}
M_{\lfloor t\rfloor}&=\max_{k=1}^{\lfloor t\rfloor}\left(v_k+I_k\right)-I_{\lfloor t\rfloor}\\
&=\max_{k=1}^{\lfloor t\rfloor}\left(V(k)-V(k-)+I_k\right)-I_{\lfloor t\rfloor}\\
&=\max_{k=1}^{\lfloor t\rfloor}\left(V(k)-V(k-)+H(U,V,1)(k)\right)-H(U,V,1)(\lfloor t\rfloor)
\end{split}
\end{equation*}
by lemma \ref{lem:I_n=H}.  For a positive integer $k$ we have $H(U,V,1)(t)$ is
constant for $t$ in $[k,k+1)$ and $V(k)-V(k-)\geq V(t)-V(t-)$ for $t$ in
  $[k,k+1)$.  Thus, $V(t)-V(t-)+H(U,V,1)(t)$ is maximized when $t$ is an
    integer.  Thus,
\begin{equation*}
\begin{split}
M_{\lfloor t\rfloor}&=\sup_{0\leq s\leq t}\left(V(s)-V(s-)+H(U,V,1)(s)\right)-H(U,V,1)(t)\\
&=F(U,V,1)(t).
\end{split}
\end{equation*}
\end{proof}

Finally we can express $M(\cdot)$ as function of $U$ and $V$.  By Definition
\eqref{eq:M_R}, $M(t)$ is the composition $M_{(\cdot)}$ with the arrival
process to the second queue.  That is,
\begin{equation*}
\begin{split}
M(t)&=M_{R(t)}\\
&=F(U,V,1)(\max\left\{m\geq 0:V(m)+H(U,V,1)(m)\leq t\right\}).
\end{split}
\end{equation*}
Notice that the plateau process is greater than or equal to the workload in
the second queue at each time, that is $M(t)\geq W_2(t)$ for each $t\geq 0$.

\section{Continuity properties of $G,H,$ and $F$}\label{s.Continuity} 
Note that the function $F$ is not continuous everywhere. For example, let
$x_n= x= 1_{[1,\infty)}+1_{[2,\infty)}$, let $y=
  1_{[1,\infty)}$, and let $y_n= y(\cdot-1/n)$ so that $(x_n,y_n,0)$
    clearly converges to $(x,y,0)$ in
    $\mathbb{D}\times\mathbb{D}\times\mathbb{R}$. Then $F(x_n,y_n,0)=
    y_n$ which converges in the Skorohod $J_1$-topology to $y$. But this does
    not equal $F(x,y,0)=1_{[1,2)}$, so $F$ is not continuous at $(x,y,0)$. 

In this section we identify a subset of the domain of $F$ that almost surely
contains the limits of the processes we are interested in and on which $F$ is
indeed continuous.  This result is obtained by treating $F$ as a composition of
continuous functions.  The strategy of proof is similar to showing addition is
continuous on a large subset of $\mathbb{D}\times\mathbb{D}$ (see e.g.\
\cite{whitt1980some}).

\begin{lem}\label{GisContinuous}
For any $x\in \mathbb{D}$, $G$ is continuous at $(x,0)$ in the product topology on $\mathbb D\times \R$.
\end{lem}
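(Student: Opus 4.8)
The plan is to work directly from the Skorohod $J_1$ characterization given in the paper (Ethier--Kurtz Proposition 3.5.3). Fix $x\in\mathbb D$ and suppose $(x_n,c_n)\to(x,0)$ in $\mathbb D\times\R$; I want to show $G(x_n,c_n)\to G(x,0)=x$ in $J_1$. Fix $T>0$. By the characterization applied to $x_n\xrightarrow{J_1} x$ on the interval $[0,T+1]$, there are time changes $\lambda_n\in\Lambda'$ with $\sup_{0\le t\le T+1}|\lambda_n(t)-t|\to 0$ and $\sup_{0\le t\le T+1}|x_n(t)-x(\lambda_n(t))|\to 0$. The natural guess for time changes witnessing $G(x_n,c_n)\xrightarrow{J_1} x$ on $[0,T]$ is to build a map $\mu_n$ that first shifts by $c_n$ and then applies $\lambda_n$; concretely, on $[c_n,T]$ one wants $\mu_n(t)$ to satisfy $G(x_n,c_n)(t)=x_n(t-c_n)\approx x(\lambda_n(t-c_n))=x(\mu_n(t))$, and on $[0,c_n]$ (if $c_n>0$) the function $G(x_n,c_n)$ is frozen at $x_n(0)$, which is close to $x(0)$, so a linear time change from $[0,c_n]$ onto a small interval near $0$ works there. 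One then checks $\mu_n\in\Lambda'$ (strictly increasing, onto $\R_+$ — this requires patching the two pieces together and extending past $T$, e.g. by the identity), that $\sup_{0\le t\le T}|\mu_n(t)-t|\to 0$ (it is bounded by $|c_n|+\sup|\lambda_n(s)-s|$ plus the contribution of the initial patch, all vanishing), and that $\sup_{0\le t\le T}|G(x_n,c_n)(t)-x(\mu_n(t))|\to 0$.

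The one genuinely delicate point is the behavior near $t=0$ when $c_n$ has a definite sign. If $c_n>0$, then $G(x_n,c_n)$ is constant equal to $x_n(0)$ on $[0,c_n]$, and since $x$ need not be constant near $0$, one cannot literally match values pointwise — this is exactly why one must allow the time change $\mu_n$ to compress $[0,c_n]$ onto a vanishingly short interval $[0,\delta_n]$ with $\delta_n\to 0$, after which $x(\mu_n(t))=x(\delta_n\,t/c_n)$ stays within $\sup_{0\le s\le\delta_n}|x(s)-x(0)|$ of $x(0)\approx x_n(0)$; right-continuity of $x$ at $0$ makes this bound vanish. If $c_n<0$, the argument is symmetric but easier: $G(x_n,c_n)(t)=x_n(t-c_n)=x_n(t+|c_n|)$ just reads $x_n$ slightly ahead of time, and a small shift of $\lambda_n$ handles it. One must be a little careful that the shift does not push arguments beyond $T+1$ for $t$ up to $T$, which is why the auxiliary bound was taken on $[0,T+1]$ rather than $[0,T]$; since $|c_n|\to 0$ this is eventually harmless.

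The remaining steps are routine: verify that the concatenation of the initial patch, the shifted-and-reparametrized middle piece, and an identity tail is strictly increasing and onto $\R_+$, hence lies in $\Lambda'$; combine the uniform estimates via the triangle inequality, splitting $\sup_{0\le t\le T}$ into the piece $[0,c_n\vee 0]$ (or $[0,|c_n|]$) and the complementary piece where the $\lambda_n$-estimates apply directly; and conclude by the Ethier--Kurtz criterion that $G(x_n,c_n)\xrightarrow{J_1} x$ for every $T$, which is the desired continuity of $G$ at $(x,0)$. I expect the main obstacle, as noted, to be the bookkeeping at the origin — it is the only place where the product structure of the topology on $\mathbb D\times\R$ and the right-continuity (rather than continuity) of $x$ interact nontrivially, and getting a clean construction of $\mu_n$ there is the crux of the argument.
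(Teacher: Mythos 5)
Your plan is correct and follows essentially the same route as the paper's proof: a time change that is a shift by $c_n$ composed with $\lambda_n$ away from the origin, together with a linear compression of the initial interval $[0,2|c_n|]$ handled via right-continuity of $x$ at $0$. You correctly identify the origin as the only delicate point, and your device of taking the auxiliary $\lambda_n$-estimates on $[0,T+1]$ to absorb the read-ahead when $c_n<0$ is a clean way to handle a detail the paper glosses over.
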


\begin{proof}
Let $c_n$ be a sequence in $\R$ with $c_n\to 0$, and let $x_n\to x$ in
$\mathbb D$.  Then for each $T>0$ there exists $\{\lambda_n\}\subset \Lambda$
such that $\sup_{0\leq t\leq T}|\lambda_n(t)-t|\to0$ as $n\to\infty$ and
$\sup_{0\leq t\leq T}|x_n(t)-x(\lambda_n(t))|\to0$ as $n\to\infty$.

For each $n=1,2,\ldots$ define 
$$\tilde\lambda_n(t)=\left\{\begin{array}{ll} 
\lambda_n(t-c_n), &\text{ if } t\geq 2|c_n|,\\
\lambda_n\left(\left(1-\frac{\sgn(c_n)}{2}\right)t\right), &\text{ if } t< 2|c_n|,
\end{array}\right. $$
where $\sgn(c_n)=-1$ if $c_n<0$, $\sgn(c_n)=1$ if $c_n> 0$, and $\sgn(c_n)=0$ if $c_n=0$.

We have $\{\tilde\lambda_n\}\subset \Lambda$ because each $\tilde\lambda_n$ is
the composition of two functions in $\Lambda$.  Now,

\begin{multline*}
\sup_{0\leq t\leq T}\left|\tilde\lambda_n(t)-t\right|=\left(\sup_{0\leq t< 2|c_n|}\left|\tilde\lambda_n(t)-t\right|\right)\vee\left(\sup_{2|c_n|\leq t\leq T}\left|\tilde\lambda_n(t)-t\right|\right)\\
=\left(\sup_{0\leq t<2|c_n|}\left|\lambda_n\left(\left(1-\frac{\sgn(c_n)}{2}\right)t\right) -t\right|\right)\vee\left(\sup_{2|c_n|\leq t\leq T}\left|\lambda_n(t-c_n)-t\right|\right)\\
\leq\left(\sup_{0\leq t<2|c_n|}\left|\lambda_n\left(\left(1-\frac{\sgn(c_n)}{2}\right)t\right) -\left(1-\frac{\sgn(c_n)}{2}\right)t\right|\right.\\
\left.+\sup_{0\leq t\leq 2|c_n|}\left|\left(1-\frac{\sgn(c_n)}{2}\right)t-t\right|\right)\vee\left(\sup_{2|c_n|\leq t\leq T}\left|\lambda_n(t-c_n)-(t-c_n)\right|+|c_n|\right).
\end{multline*}

When $0\leq t<2|c_n|$ we have $0\leq \left(1-\frac{\sgn(c_n)}{2}\right)t\leq 3|c_n|$, so
\begin{equation*}\begin{split}
\sup_{0\leq t\leq T}\left|\tilde\lambda_n(t)-t\right|&\leq\left(\sup_{0\leq t<3|c_n|}\left|\lambda_n\left(t\right) -t\right|+3|c_n|\right)\\
&\qquad\vee\left(\sup_{2|c_n|-c_n\leq t\leq T-c_n}\left|\lambda_n(t)-t\right|+|c_n|\right)\\
&\leq\sup_{0\leq t\leq T}\left|\lambda_n(t)-t\right|+3|c_n|,
\end{split}\end{equation*}
so $\sup_{0\leq t\leq T}\left|\tilde\lambda_n(t)-t\right|\to 0$ as $n\to\infty$.

Now, it suffices to show $\displaystyle \sup_{0\leq t\leq T}\left|G(x_n,c_n)(t)-G(x,0)(\tilde\lambda_n(t))\right|\to 0$ by \cite{ethier2009markov} Proposition 3.5.3.  We have

\begin{multline}\label{eq:2|c_n|<t}
\sup_{2|c_n|\leq t\leq T}\left|G(x_n,c_n)(t)-G(x,0)(\tilde\lambda_n(t))\right|\\
=\sup_{2|c_n|\leq t\leq T}\left|x_n([t-c_n]^+)-x(\tilde\lambda_n(t))\right|\\
=\sup_{2|c_n|\leq t\leq T}\left|x_n( t-c_n)-x(\lambda_n(t-c_n))\right|\\
=\sup_{2|c_n|-c_n\leq t\leq T-c_n}\left|x_n(t)-x(\lambda_n(t))\right|\to 0
\end{multline}
So it suffices to show $\sup_{0\leq t<
2|c_n|}\left|G(x_n,c_n)(t)-G(x,0)(\tilde\lambda_n(t))\right|\to 0$.

Fix $\eps>0$ and let $\eta>0$ such that $\sup_{0\leq t\leq
\eta}|x(0)-x(t)|<\eps$ by right continuity of $x$ at zero.  Now, for $n$ so
large that $|c_n|<\min(T/3,\eta/6)$, $\sup_{0\leq t\leq
T}|\lambda_n(t)-t|<\eps \wedge \eta/2$, and $\sup_{0\leq t\leq
T}|x_n(t)-x(\lambda_n(t))|<\eps$ consider the $c_n<0$, $c_n>0$, and $c_n=0$
cases.

If $c_n<0$,
\begin{multline*}
\sup_{0\leq t< 2|c_n|}\left|G(x_n,c_n)(t)-G(x,0)(\tilde\lambda_n(t))\right|\\
=\sup_{0\leq t< 2|c_n|}\left|x_n([t-c_n]^+)-x(\tilde\lambda_n(t))\right|\\
=\sup_{0\leq t< -2c_n}\left|x_n(t-c_n)-x(\lambda_n(3t/2))\right|\\
\leq\sup_{0\leq t<-2c_n}\left|x_n(t-c_n)-x(\lambda_n(t-c_n))\right|+\left|x(\lambda_n(t-c_n))-x(\lambda_n(3t/2))\right|\\
\leq \sup_{0\leq t\leq T}\left|x_n(t)-x(\lambda_n(t))\right|+\sup_{0\leq t< -2c_n}\left|x(\lambda_n(t-c_n))-x(\lambda_n(3t/2))\right|\\
\leq \sup_{0\leq t\leq T}\left|x_n(t)-x(\lambda_n(t))\right|+\sup_{0\leq t< -2c_n}\left|x(\lambda_n(t-c_n))\right|+\sup_{0\leq t<-2c_n}\left|x(\lambda_n(3t/2))\right|.
\end{multline*}

We have $(t-c_n)\vee (3t/2)\leq -3c_n$ for $0\leq t<-2c_n$, and so 
\begin{equation*}
\lambda_n(t-c_n)\vee \lambda_n(3t/2)\leq \lambda_n(-3c_n)\leq -3c_n+\eta/2\leq \eta.
\end{equation*}

Thus,
\begin{multline*}
\sup_{0\leq t< 2|c_n|}\left|G(x_n,c_n)(t)-G(x,0)(\tilde\lambda_n(t))\right|\\
\leq \eps+\sup_{0\leq t< -2c_n}\left|x(\lambda_n(t-c_n))\right|+\sup_{0\leq t<-2c_n}\left|x(\lambda_n(3t/2))\right|\\
\leq \eps+\sup_{0\leq t\leq \eta}\left|x(t)\right|+\sup_{0\leq t\leq\eta}\left|x(t)\right|\leq 3\eps
\end{multline*}

If $c_n>0$,

\begin{multline}\label{c_n>0}
\sup_{0\leq t< 2|c_n|}\left|G(x_n,c_n)(t)-G(x,0)(\tilde\lambda_n(t))\right|\\
=\sup_{0\leq t< 2c_n}\left|x_n([t-c_n]^+)-x(\tilde\lambda_n(t))\right|\\
=\sup_{0\leq t< 2c_n}\left|x_n([t-c_n]^+)-x(\lambda_n(t/2))\right|\\
\leq\sup_{0\leq t< c_n}\left|x_n(0)-x(\lambda_n(t/2))\right|\vee\sup_{c_n\leq t< 2c_n}\left|x_n(t-c_n)-x(\lambda_n(t/2))\right|.
\end{multline}
For the first term,
\begin{multline*}
\sup_{0\leq t\leq c_n}\left|x_n(0)-x(\lambda_n(t/2))\right|\leq \sup_{0\leq t< c_n}\left|x_n(0)-x(0)|+|x(0)-x(\lambda_n(t/2))\right|\\
= \left|x_n(0)-x(\lambda_n(0))\right| + \sup_{0\leq t<c_n}\left|x(0)-x(\lambda_n(t/2))\right|\\
\leq \sup_{0\leq t\leq T} \left|x_n(t)-x(\lambda_n(t))\right|+\sup_{0\leq t\leq \eta}\left|x(0)-x(t)\right|\leq 2\eps,
\end{multline*}
since $\lambda_n(t/2)\leq \lambda_n(c_n/2)\leq c_n/2+\eta/2\leq \eta$ for $0\leq t\leq c_n$.
For the second term,
\begin{multline*}
\sup_{c_n\leq t< 2c_n}\left|x_n(t-c_n)-x(\lambda_n(t/2))\right|
=\sup_{0\leq t< c_n}\left|x_n(t)-x\left(\lambda_n\left(\frac{t+c_n}{2}\right)\right)\right|\\
\leq \sup_{0\leq t< c_n}\left|x_n(t)-x(\lambda_n(t))\right|+\left|x(\lambda_n(t))-x\left(\lambda_n\left(\frac{t+c_n}{2}\right)\right)\right|\\
\leq \eps+\sup_{0\leq t< c_n}\left|x(\lambda_n(t))-x(0)+x(0)-x\left(\lambda_n\left(\frac{t+c_n}{2}\right)\right)\right|\\
\leq \eps+\sup_{0\leq t< c_n}\left|x(\lambda_n(t))-x(0)\right|+\sup_{0\leq t< c_n}\left|x(0)-x\left(\lambda_n\left(\frac{t+c_n}{2}\right)\right)\right|\\
\leq \eps+2\sup_{0\leq t< \eta}\left|x(0)-x(t)\right|\leq 3\eps,\\
\end{multline*}
since $\lambda_n(t)\vee\lambda_n(\frac{t+c_n}{2})\leq \lambda_n(c_n)\leq c_n+\eta/2\leq \eta$ for $0\leq t\leq c_n$.

If $c_n=0$ then $\tilde\lambda_n=\lambda_n$ so $G(x_n,c_n)(t)-G(x,0)(\tilde\lambda_n(t))=x_n(t)-x(\lambda_n(t))$, which converges to zero uniformly by assumption.

So in all three cases we have 
\begin{equation*}
\sup_{0\leq t< 2|c_n|}\left|G(x_n,c_n)(t)-G(x,0)(\tilde\lambda_n(t))\right|\leq 3\eps.
\end{equation*}
Together with \eqref{eq:2|c_n|<t} and since $\epsilon$ was arbitrary, we have
\begin{equation*}
\sup_{0\leq t\leq T}\left|G(x_n,c_n)(t)-G(x,0)(\tilde\lambda_n(t))\right|\to 0
\end{equation*}
as $n\to\infty$.

So we have $G(x_n,c_n)\to G(x,0)$ on $\mathbb D$.
\end{proof}

For $x\in \mathbb D$, let Disc$(x)$ denote the set of discontinuities of $x$.

\begin{lem}\label{HIsContinuous}
$H$ is continuous at $(x,y,0)$ for all $x,y\in \mathbb{D}$ such that 
\begin{equation*}
\text{Disc}(x)\cap\text{Disc}(y)=\varnothing.
\end{equation*}
\end{lem}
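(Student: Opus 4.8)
The plan is to exhibit $H$ as a composition of three maps, each continuous at the relevant point, and then deduce continuity of $H$ at $(x,y,0)$ from the fact that convergence in the product topology on $\mathbb D\times\mathbb D\times\R$ is coordinatewise (and that $\mathbb D$ is metrizable, so sequential continuity suffices). Recall from Definition \ref{defn:GH} that $H(x,y,c)=(x-G(y,c))^\uparrow$. I would write this as the composition of: (1) the map $(x,y,c)\mapsto(x,G(y,c))$ from $\mathbb D\times\mathbb D\times\R$ to $\mathbb D\times\mathbb D$; (2) subtraction $(f,g)\mapsto f-g$ from $\mathbb D\times\mathbb D$ to $\mathbb D$; and (3) the running-supremum map $\Phi\colon f\mapsto f^\uparrow$ on $\mathbb D$.

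For step (1), the point is that $G(y,0)=y$ and, by Lemma \ref{GisContinuous}, $c\mapsto G(y,c)$ is continuous at $c=0$; since the first coordinate is untouched, $(x,y,c)\mapsto(x,G(y,c))$ is continuous at $(x,y,0)$ with value $(x,y)$. Thus if $(x_n,y_n,c_n)\to(x,y,0)$, then applying Lemma \ref{GisContinuous} with $y_n$ in place of its ``$x_n$'' gives $G(y_n,c_n)\to y$, and together with $x_n\to x$ we obtain $(x_n,G(y_n,c_n))\to(x,y)$ in $\mathbb D\times\mathbb D$.

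Step (2) is where the disjoint-discontinuities hypothesis enters, and it is the step I expect to require the most care. Writing subtraction as addition followed by the negation map $g\mapsto-g$ (which is an isometry, hence continuous, and satisfies $\text{Disc}(-g)=\text{Disc}(g)$), it suffices to invoke the classical fact that addition $\mathbb D\times\mathbb D\to\mathbb D$ is continuous at any pair $(f,g)$ with $\text{Disc}(f)\cap\text{Disc}(g)=\varnothing$ (see \cite{whitt1980some}). The assumption $\text{Disc}(x)\cap\text{Disc}(y)=\varnothing$ is then exactly what is needed at the limit point $(x,y)$, so $x_n-G(y_n,c_n)\to x-y$ in $\mathbb D$. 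The subtlety to watch is that this continuity result applies at the limit point, not along the approximating sequence; this causes no trouble here because $G(y_n,c_n)$ converges to $y$ exactly, so no hypothesis about $\text{Disc}(G(y_n,c_n))$ is needed.

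For step (3), I would record that $\Phi$ is continuous on all of $\mathbb D$: from the identity $(f\circ\lambda)^\uparrow=f^\uparrow\circ\lambda$, valid for any nondecreasing $\lambda$, one gets, for $\lambda_n\in\Lambda'$ with $\sup_{0\le t\le T}|\lambda_n(t)-t|\to0$ and $\sup_{0\le t\le T}|f_n(t)-f(\lambda_n(t))|\to0$, that $\sup_{0\le t\le T}|f_n^\uparrow(t)-f^\uparrow(\lambda_n(t))|\le\sup_{0\le s\le T}|f_n(s)-f(\lambda_n(s))|\to0$, so $\Phi(f_n)\to\Phi(f)$ by Proposition 3.5.3 of \cite{ethier2009markov}. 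Chaining the three steps yields $H(x_n,y_n,c_n)=\Phi(x_n-G(y_n,c_n))\to\Phi(x-y)=H(x,y,0)$, which is the claim. Steps (1) and (3) are routine given Lemma \ref{GisContinuous}, so the only genuine obstacle is the correct invocation of the addition-continuity result in step (2).
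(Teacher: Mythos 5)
Your proof is correct and rests on the same three ingredients as the paper's: Lemma \ref{GisContinuous} for the shift parameter, Whitt's addition-continuity theorem at pairs with disjoint discontinuity sets, and continuity of the running-supremum map $f\mapsto f^\uparrow$ via $(f\circ\lambda)^\uparrow=f^\uparrow\circ\lambda$. The only difference is the order of composition (the paper first forms the difference $z_n=y_n-x_n$ and then applies $G$, whereas you apply $G$ to $y_n$ first and then subtract, which tracks the definition $H(x,y,c)=(x-G(y,c))^\uparrow$ more literally); this is a cosmetic rearrangement, not a different route.
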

\begin{proof}
Let $c_n\in \R$ with $c_n\to 0$ and let $x_n$ and $y_n$ be in $\mathbb{D}$
such that $x_n\to x$ and $y_n\to y$ and fix a time $T>0$.  Let $z_n=y_n-x_n$
and $z=y-x$.  Since Disc$(x)\cap$Disc$(-y)=\varnothing$, \cite{whitt1980some}
Theorem 4.1 tells us that there exists $\{\lambda_n\}\subset \Lambda'$ such
that $\rho_T(\lambda_n,e)\to 0$ and $\rho_T(z_n,z\circ \lambda_n)\to0$.  Since
$G$ is continuous at $(z,0)$ by lemma \ref{GisContinuous}, and $(z_n,c_n)\to
(z,0)$ we have $\{\tilde\lambda_n\}\subset \Lambda'$ such that
$\rho_T(\tilde\lambda_n, e)\to 0$ and
$\rho_T(G(z_n,c_n),z\circ\tilde\lambda_n)\to0$.  In fact, we may construct
$\tilde\lambda_n$ as in the proof of $\ref{GisContinuous}$.  Since $x\mapsto
x^\uparrow$ is continuous on $\mathbb{D}$ and
$(x)^\uparrow\circ\tilde\lambda=(x\circ\tilde\lambda)^\uparrow$, we have
$\rho_T(H(x_n,y_n,c_n),H(x,y,0)\circ\tilde\lambda_n)\to 0$.  Since $T$ was
arbitrary we have $H$ is continuous $(x,y,0)$. 
\end{proof}

\begin{lem}\label{JumpsofH} For all $x,y\in\mathbb D$,
\begin{equation*}
\text{Disc}(H(x,y,0))\subset\{t:y(t)-y(t-)>0\}\cup\{t:x(t)-x(t-)<0\}.
\end{equation*}
In particular, if $\{t:x(t)-x(t-)<0\}=\varnothing$, then
\begin{equation*}
\text{Disc}(H(x,y,0))\subset\text{Disc}(y).
\end{equation*}
\end{lem}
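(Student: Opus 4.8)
The plan is to recognise $H(x,y,0)$ as an ordinary running supremum and then describe the jump times of such a process. Since $G(y,0)(t)=y([t-0]^+)=y(t)$ for every $t\ge0$, the definition of $H$ (Definition~\ref{defn:GH}) gives $H(x,y,0)=(x-y)^\uparrow$. Writing $f:=x-y\in\mathbb D$, it therefore suffices to locate $\text{Disc}(f^\uparrow)$ and then re-express the answer in terms of the jumps of $x$ and of $y$ separately.

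The heart of the matter, and the one step I would treat carefully, is a general fact about running suprema. For any $f\in\mathbb D$, the path $f^\uparrow$ is nondecreasing, is right-continuous (because $\sup_{t<s\le u}f(s)\to f(t)$ as $u\downarrow t$ by right-continuity of $f$), and has left limits $f^\uparrow(t-)=\sup_{0\le s<t}f(s)$. Consequently $f^\uparrow(t)=f^\uparrow(t-)\vee f(t)$, so $f^\uparrow$ is discontinuous at $t$ if and only if $f(t)>\sup_{0\le s<t}f(s)$. Because $f(t-)=\lim_{s\uparrow t}f(s)\le\sup_{0\le s<t}f(s)$, every such $t$ also satisfies $f(t)>f(t-)$; that is, a running supremum can jump only at a strictly positive jump of the path it is built from. (This is the same mechanism that keeps the running maximum of a continuous function continuous.)

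Applying this with $f=x-y$: if $t\in\text{Disc}(H(x,y,0))$ then $x-y$ has a strictly positive jump at $t$, i.e.
\begin{equation*}
\bigl(x(t)-x(t-)\bigr)-\bigl(y(t)-y(t-)\bigr)>0.
\end{equation*}
A one-line case check on the signs of the individual jumps $x(t)-x(t-)$ and $y(t)-y(t-)$ then places $t$ in $\{t:y(t)-y(t-)>0\}\cup\{t:x(t)-x(t-)<0\}$, which is the asserted inclusion. The ``in particular'' clause is then immediate: when $x$ has no downward jump the second set on the right is empty, so $\text{Disc}(H(x,y,0))\subset\{t:y(t)-y(t-)>0\}\subset\text{Disc}(y)$. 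I expect essentially all of the (modest) difficulty to be concentrated in the middle paragraph — correctly computing $f^\uparrow(t-)$ and thereby confining the discontinuities of $f^\uparrow$ to the upward jumps of $f$ — with the passage back to $x$ and $y$ and the special case being routine.
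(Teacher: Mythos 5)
Your first two paragraphs are careful and in fact tighter than the paper's own argument: $G(y,0)=y$, so $H(x,y,0)=(x-y)^\uparrow$, and for any $f\in\mathbb D$ the running supremum $f^\uparrow$ can be discontinuous at $t$ only when $f(t)>f(t-)$. Applied to $f=x-y$, this shows that every $t\in\text{Disc}(H(x,y,0))$ satisfies $\bigl(x(t)-x(t-)\bigr)-\bigl(y(t)-y(t-)\bigr)>0$. Up to here you are correct (and have silently fixed a sign slip in the paper's own proof, which at the analogous point writes $\text{Disc}(H(x,y,0))\subset\{t:(y-x)(t)-(y-x)(t-)>0\}$ --- a statement about $(y-x)^\uparrow$, not about $H(x,y,0)=(x-y)^\uparrow$).

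The final ``one-line case check'' is where the argument breaks, and it is not a small matter. From $\bigl(x(t)-x(t-)\bigr)-\bigl(y(t)-y(t-)\bigr)>0$ you may conclude, by contraposition, that either $x(t)-x(t-)>0$ or $y(t)-y(t-)<0$; you may \emph{not} conclude the asserted alternative $y(t)-y(t-)>0$ or $x(t)-x(t-)<0$. A concrete counterexample to the claimed inclusion: take $x=1_{[1,\infty)}$ and $y\equiv 0$; then $H(x,y,0)=x$ is discontinuous at $t=1$, yet $y$ has no positive jump and $x$ has no negative jump there, so $1$ lies in neither set on the right-hand side. The lemma as stated is therefore false, and what your derivation actually proves is the sign-correct version
\begin{equation*}
\text{Disc}(H(x,y,0))\subset\{t:x(t)-x(t-)>0\}\cup\{t:y(t)-y(t-)<0\},
\end{equation*}
with the ``in particular'' clause: if $\{t:y(t)-y(t-)<0\}=\varnothing$ then $\text{Disc}(H(x,y,0))\subset\text{Disc}(x)$. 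That corrected form is also the one actually invoked later, in the proof of Lemma~\ref{FisContinuous}, whose hypothesis is that $y$ has no negative jumps and which needs $\text{Disc}(y)\cap\text{Disc}(H(x,y,0))\subset\text{Disc}(x)\cap\text{Disc}(y)$. So: your reduction to the running supremum is right and your conclusion is wrong only because you forced it to match a mis-stated target; replace the final step by the contrapositive above and you have a correct (and correctly targeted) proof.
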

\begin{proof}
$\text{Disc}(H(x,y,0))=\{t:H(x,y,0)(t)-H(x,y,0)(t-)\neq 0\}=\{t:H(x,y,0)(t)-H(x,y,0)(t-)> 0\}$ since $H(x,y,0)$ is nondecreasing.  Thus,
\begin{equation*}
\begin{split}
\text{Disc}(H(x,y,0))&\subset \{t: (y-x)(t)-(y-x)(t-)>0\}\\
&\subset\{t:y(t)-y(t-)>0\}\cup\{t:x(t)-x(t-)<0\}.
\end{split}
\end{equation*}
\end{proof}

\begin{lem}\label{Realign} 
Let $\lambda_n$ and $\gamma_n$ be strictly increasing homeomorphisms from
$[0,T]$ onto $[0,T]$ and $x_n,x\in \mathbb{D}$ such that for some finite
collection $\{t_j\}_{j=0}^N\subset [0,T]$ with
\renewcommand{\labelenumi}{{\rm(}$\roman{enumi}${\rm)}}
\begin{enumerate}
\item $0=t_0<t_1<\cdots<t_N=T$ we have $\lambda_n^{-1}(t_j)=\gamma_n^{-1}(t_j)$ for each $j=0,1,2,\ldots, N$,
\item $\rho_T(x_n, x\circ \lambda_n)<\eps$, and
\item $w(x,[t_{j-1},t_j))=\sup\left(|x(t)-x(s)|:t,s\in [t_{j-1},t_j)\right)<\eps$ for each $j=1,2,\ldots, N$,
\end{enumerate}
then 
\begin{equation*}
\rho_T(x_n, x\circ \gamma_n)<3\eps.
\end{equation*}
\end{lem}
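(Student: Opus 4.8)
The plan is to exploit hypothesis (i): fix the index $n$ (the statement really concerns a single pair of time changes), and set $s_j\define\lambda_n^{-1}(t_j)=\gamma_n^{-1}(t_j)$ for $j=0,1,\dots,N$. Since $\lambda_n$ and $\gamma_n$ are strictly increasing homeomorphisms of $[0,T]$, we get a single partition $0=s_0<s_1<\cdots<s_N=T$ with the property that \emph{both} $\lambda_n$ and $\gamma_n$ map each block $[s_{j-1},s_j)$ into the same block $[t_{j-1},t_j)$. This is the only role of (i), and it is precisely what allows the oscillation bound (iii) to be applied to $x\circ\gamma_n$ even though (ii) only compares $x_n$ with $x\circ\lambda_n$.

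First I would fix $j\in\{1,\dots,N\}$ and $s\in[s_{j-1},s_j)$. Strict monotonicity of $\lambda_n$ together with $\lambda_n(s_{j-1})=t_{j-1}$, $\lambda_n(s_j)=t_j$ gives $\lambda_n(s)\in[t_{j-1},t_j)$; the same argument applied to $\gamma_n$ gives $\gamma_n(s)\in[t_{j-1},t_j)$. Hence $x(\lambda_n(s))$ and $x(\gamma_n(s))$ are two values of $x$ on the half-open block $[t_{j-1},t_j)$, so by (iii), $|x(\lambda_n(s))-x(\gamma_n(s))|\le w(x,[t_{j-1},t_j))<\eps$. Combining this with (ii), which gives $|x_n(s)-x(\lambda_n(s))|<\eps$, the triangle inequality yields $|x_n(s)-x(\gamma_n(s))|<2\eps$ for every $s$ in $[0,T)=\bigcup_{j=1}^N[s_{j-1},s_j)$.

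It remains to handle the single point $s=T$. Since $\lambda_n$ and $\gamma_n$ are increasing bijections of $[0,T]$ \emph{onto} $[0,T]$, we have $\lambda_n(T)=\gamma_n(T)=T$, so $|x_n(T)-x(\gamma_n(T))|=|x_n(T)-x(\lambda_n(T))|<\eps$ by (ii) again. Taking the supremum over $s\in[0,T]$ then gives $\rho_T(x_n,x\circ\gamma_n)\le 2\eps<3\eps$, which is the assertion (the slack between $2\eps$ and $3\eps$ is harmless in the applications).

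I do not expect a genuine obstacle: the content is entirely in the bookkeeping above. The one point that requires a little care is that (iii) controls the oscillation of $x$ only over the half-open blocks $[t_{j-1},t_j)$, so one must verify that $\lambda_n(s)$ and $\gamma_n(s)$ land \emph{strictly} below the right endpoint $t_j$ when $s<s_j$ — which is immediate from strict monotonicity — and that the right endpoint $s=T$ is disposed of separately, as done above.
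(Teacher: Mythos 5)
Your proof is correct and follows essentially the same route as the paper: both arguments use hypothesis (i) to produce a common partition $\{s_j\}=\{\lambda_n^{-1}(t_j)\}=\{\gamma_n^{-1}(t_j)\}$ on which $\lambda_n$ and $\gamma_n$ map each block $[s_{j-1},s_j)$ into the same block $[t_{j-1},t_j)$, and then combine (ii) with the oscillation bound (iii) via the triangle inequality. The only difference is that you compare $x(\lambda_n(s))$ and $x(\gamma_n(s))$ directly within one block (one application of (iii)), whereas the paper pivots through the anchor value $x(t_{j-1})$ (two applications of (iii)), so you obtain the slightly sharper bound $2\eps$ in place of $3\eps$.
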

\begin{proof}
Let $r_j=\gamma_n^{-1}(t_j)=\lambda_n^{-1}(t_j)$ for $j=0,1,\ldots, N$, so that $\cup_{j=1}^N \left[r_{j-1},r_j\right)=\cup_{j=1}^N \left[t_{j-1},t_j\right)=[0,T)$.
Then 
\begin{equation*}\begin{split}
\rho_T(x_n,x\circ \gamma_n)&=\sup_{0\leq t\leq T}\left|x_n(t)-x(\gamma_n(t))\right|\\
&=\max_{k=1}^N\sup_{r_{j-1}\leq t< r_j}\left|x_n(t)-x(\gamma_n(t))\right|\vee|x_n(T)-x(T)|\\
&=\max_{k=1}^N\sup_{t_{j-1}\leq t< t_j}\left|x_n(\gamma_n^{-1}(t))-x(t)\right|\vee|x_n(T)-x(T)|\\
&=\max_{k=1}^N\sup_{t_{j-1}\leq t< t_j}\left|x_n(\gamma_n^{-1}(t))-x(t_{j-1})+x(t_{j-1})-x(t)\right|\\
&\qquad\vee|x_n(T)-x(T)|,
\end{split}\end{equation*}
and so
\begin{equation*}\begin{split}
\rho_T(x_n,x\circ \gamma_n)&\leq\max_{k=1}^N\left(\sup_{t_{j-1}\leq t< t_j}\left|x_n(\gamma_n^{-1}(t))-x(t_{j-1})\right|+w(x,[t_{j-1},t_j))\right)\\
&\qquad\vee|x_n(T)-x(T)|\\
&\leq\max_{k=1}^N\left(\sup_{r_{j-1}\leq t< r_j}\left|x_n(t)-x(\lambda_n(r_{j-1}))\right|+\eps \right)\\
&\qquad\vee|x_n(T)-x(T)|\\
&\leq\max_{k=1}^N\left(\sup_{r_{j-1}\leq t< r_j}\left|x_n(t)-x(\lambda_n(t))\right|\right.\\
&\qquad+\left.{\vphantom{\sup_{r_{j-1}\leq t< r_j}\left|x_n(t)-x(\lambda_n(t))\right|}}\left|x(\lambda_n(t))-x(\lambda_n(r_{j-1}))\right|+\eps \right)\vee|x_n(T)-x(T)|\\
&\leq\max_{k=1}^N\left(\sup_{r_{j-1}\leq t< r_j}\left|x_n(t)-x(\lambda_n(t))\right|+w(x,[t_{j-1},t_j))+\eps \right)\\
&\qquad\vee|x_n(T)-x(T)|\\
&\leq\max_{k=1}^N\left(\sup_{r_{j-1}\leq t< r_j}\left|x_n(t)-x(\lambda_n(t))\right|+2\eps \right)\vee|x_n(T)-x(T)|\\
&\leq \rho_T(x_n,x\circ \lambda_n)+2\eps\\
&\leq 3\eps.
\end{split}\end{equation*}
\end{proof}

Finally, we prove that $F$ is continuous on a relevant set.
\begin{lem}\label{FisContinuous}
$F$ is continuous at $(x,y,0)$ in the product topology on $\mathbb{D}\times \mathbb{D}\times \R$, for all $x$ and $y\in \mathbb{D}$ with $\text{Disc}(x)\cap\text{Disc}(y)=\varnothing$ and
\begin{equation*}
\{t:y(t)-y(t-)<0\}=\varnothing.
\end{equation*}
\end{lem}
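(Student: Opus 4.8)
The plan is to use the representation $F(x,y,c)=\big(y-y^-+H(x,y,c)\big)^\uparrow-H(x,y,c)$ obtained just above, writing $F=\Phi-H$ with $\Phi(x,y,c):=\big(y-y^-+H(x,y,c)\big)^\uparrow$. Continuity of $H$ at $(x,y,0)$ is already available from Lemma \ref{HIsContinuous}, whose proof produces, for each $T$ and each $(x_n,y_n,c_n)\to(x,y,0)$, a \emph{single} sequence $\{\tilde\lambda_n\}\subset\Lambda'$ with $\rho_T(\tilde\lambda_n,e)\to0$ and $\rho_T\big(H(x_n,y_n,c_n),H(x,y,0)\circ\tilde\lambda_n\big)\to0$. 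The argument then reduces to exhibiting one time-change sequence $\{\psi_n\}\subset\Lambda'$ with $\rho_T(\psi_n,e)\to0$ that realizes this convergence for $H$ \emph{and} simultaneously gives $\rho_T\big(\Phi(x_n,y_n,c_n),\Phi(x,y,0)\circ\psi_n\big)\to0$; then $\rho_T\big(F(x_n,y_n,c_n),F(x,y,0)\circ\psi_n\big)$ is bounded by the sum of the two, and Proposition 3.5.3 of \cite{ethier2009markov} gives $F(x_n,y_n,c_n)\to F(x,y,0)$ in $\mathbb D$.

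The point requiring work is $\Phi$, since the jump map $y\mapsto y-y^-$ is not $J_1$-continuous. Two structural facts make it manageable. First, by Lemma \ref{JumpsofH}, since $\{t:y(t)-y(t-)<0\}=\varnothing$ the discontinuities of $H(x,y,0)$ lie in $\text{Disc}(x)$, hence (by hypothesis) are disjoint from $\text{Disc}(y)$; so for a fixed threshold $\epsilon>0$ the finitely many ``large'' discontinuities on $[0,T]$ of both $\Phi(x,y,0)$ and $H(x,y,0)$ are contained in the (distinct) large jump times of $x$ and of $y$. Second, $y-y^-\ge0$, so $\Phi(x,y,0)(t)=\sup_{s\le t}\big(y(s)-y(s-)+H(x,y,0)(s)\big)$ is a competition between the nondecreasing function $H(x,y,0)$ and finitely many clean upward spikes of height $y(\tau)-y(\tau-)$ sitting over $H(x,y,0)$ at the large jump times $\tau$ of $y$ (plus arbitrarily small spikes), with no cancellation possible.

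To build $\psi_n$: since $\text{Disc}(x)\cap\text{Disc}(y)=\varnothing$, the pair $(x_n,y_n)$ converges to $(x,y)$ in $\mathbb D([0,\infty),\R^2)$ (cf.\ \cite{whitt1980some}), so there is $\{\lambda_n\}\subset\Lambda'$ with $\rho_T(\lambda_n,e)\to0$, $\rho_T(x_n,x\circ\lambda_n)\to0$ and $\rho_T(y_n,y\circ\lambda_n)\to0$; running this through the $G$-recipe in the proof of Lemma \ref{GisContinuous} to absorb the $c_n$-shift yields a $\psi_n$ that is good for $H(x_n,y_n,c_n)$ (by the argument of Lemma \ref{HIsContinuous}) and, since $|c_n|\to0$, still aligns the large jumps of $y_n$ with those of $y$. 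For the $\Phi$-estimate, fix $t$. By the change of variables $s\mapsto\psi_n(s)$ and continuity of $\psi_n$, $\Phi(x,y,0)(\psi_n(t))$ equals $\sup_{s\le t}\big((y\circ\psi_n)(s)-(y\circ\psi_n)(s-)+(H(x,y,0)\circ\psi_n)(s)\big)$, and replacing $H(x,y,0)\circ\psi_n$ by $H(x_n,y_n,c_n)$ inside the supremum changes it by at most $\rho_T\big(H(x_n,y_n,c_n),H(x,y,0)\circ\psi_n\big)=o(1)$. It remains to compare $\sup_{s\le t}\big((y\circ\psi_n)(s)-(y\circ\psi_n)(s-)+H(x_n,y_n,c_n)(s)\big)$ with $\Phi(x_n,y_n,c_n)(t)=\sup_{s\le t}\big(y_n(s)-y_n(s-)+H(x_n,y_n,c_n)(s)\big)$: the large spikes agree up to $o(1)$ in height because $\psi_n$ aligns the large jumps and $H(x_n,y_n,c_n)$ is within $o(1)$ of $H(x,y,0)\circ\psi_n$, which oscillates by less than $\epsilon$ between consecutive large jump times, while all remaining (small) jumps of $y_n$ and of $y\circ\psi_n$ perturb the running supremum by at most $\epsilon+o(1)$ by the second structural fact. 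If the time change best suited to $H$ is not literally the one best suited to $\Phi$, Lemma \ref{Realign} — with $\{t_j\}$ the common large jump set identified above, on which the two time changes can be made to agree — transfers the estimate from one to the other at a cost of a factor $3$. Sending $\epsilon\downarrow0$ and then $T\uparrow\infty$ finishes the proof.

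The main obstacle is exactly this bookkeeping for $\Phi$: forcing a single time change to serve both $H(x_n,y_n,c_n)$ and the jump process of $y_n$, and checking that the small jumps of $y_n$ — which may occur with either sign even though $y$ has none — perturb the running supremum by only $o(1)$. The two hypotheses enter precisely here: $\text{Disc}(x)\cap\text{Disc}(y)=\varnothing$ keeps the discontinuities of $H(x,y,0)$ off those of $y$ so that one time change can align both running-sup structures, and $\{t:y(t)-y(t-)<0\}=\varnothing$ removes cancellation and anchors the spikes of $\Phi(x,y,0)$ at the positive jumps of $y$.
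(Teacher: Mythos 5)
Your proposal is correct and follows essentially the same route as the paper's proof: reduce everything to finding a single common time change serving both $y_n$ and $H(x_n,y_n,c_n)$, use Lemma \ref{JumpsofH} together with the two hypotheses to keep $\text{Disc}(y)$ and $\text{Disc}(H(x,y,0))$ disjoint so that such a common change exists, and invoke Lemma \ref{Realign} on the merged grid to pay only a constant factor. The one place where the paper is slicker is the jump term: instead of your spike-by-spike bookkeeping of large versus small jumps, it simply observes that $\rho_T\left(y_n^-,\,y^-\circ\gamma_n\right)\leq\rho_T\left(y_n,\,y\circ\gamma_n\right)$ because $\gamma_n$ is an increasing homeomorphism, so $y_n-y_n^-+H(x_n,y_n,c_n)$ is uniformly close to $\left(y-y^-+H(x,y,0)\right)\circ\gamma_n$ and the $1$-Lipschitz running supremum finishes the estimate.
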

\begin{proof}
Let $T>0$, let $\rho_T$ be the uniform metric on function from $[0,T]$ to
$\R$, and fix $\eps>0$.  Apply Lemma 1 on page 110 of
\cite{billingsley1968convergence}  to construct finite subsets $A_1=\{t_j'\}$
and $A_2=\{s_j\}$ of $[0,T]$ such that $0=t_0'<\cdots<t_k'=T$,
$0=s_0<\cdots<s_m=T$,
$w(y;[t_{j-1}',t_j'))=\sup\{|y(s)-y(t)|:s,t\in[t_{j-1}',t_j')\}<\eps$ and
  $w(H(x,y,0);[s_{j-1},s_j))<\eps$ for all $j$.  Since
    Disc$(y)\cap$Disc$(H(x,y,0))\subset$ Disc$(x)\cap$Disc$(y)=\varnothing$,
    the two sets $A_1$ and $A_2$ can be chosen so that $A_1\cap A_2=\{0,T\}$.
    Note that $w(y;[t_{j-1},t_{j}))<\eps$ and $w(H(x,y,0);[t_{j-1},t_j))<\eps$
      for $\{t_j\}=A_1\cup A_2$.  Let $2\delta$ be the distance between the
      closest two points in $A_1\cup A_2$.  Choose $n_0$ and homeomorphisms
      $\lambda_n$ and $\mu_n$ in $\Lambda$ so that
      \renewcommand{\labelenumi}{{\rm(}$\roman{enumi}${\rm)}}
\begin{enumerate}
\item $\rho_T(y_n, y\circ \lambda_n)<(\delta\wedge \eps)$,
\item $\rho_T(\lambda_n, e)<(\delta\wedge \eps)$,
\item $\rho_T(H(x_n,y_n,c_n), H(x,y,0)\circ \mu_n)<(\delta\wedge \eps)$, and
\item $\rho_T(\mu_n, e)<(\delta\wedge \eps)$
\end{enumerate}
for $n\geq n_0$.  Thus for $n\geq n_0$ $$\lambda_n^{-1}(A_1)\cap
\mu_n^{-1}(A_2)=\{0,T\}$$ and $\{r_j\}=\lambda_n^{-1}(A_1)\cup
\mu_n^{-1}(A_2)$ has corresponding points in the same order as
$\{t_j\}=A_1\cup A_2$.  Let $\gamma_n$ be homeomorphisms of $[0,T]$ defined by
$$\gamma_n(r_j)=t_j$$ for corresponding points $r_j\in \lambda_n^{-1}(A_1)\cup
\mu_n^{-1}(A_2)$ and $t_j\in A_1\cup A_2$ and by linear interpolation
elsewhere.

Note that for each $r_j\in \lambda_n^{-1}(A_1)\cup \mu_n^{-1}(A_2)$ either
\begin{equation*}
\lambda_n(r_j)=t_j\qquad \text{ or }\qquad \mu_n(r_j)=t_j.
\end{equation*}
Since $t\mapsto \left|\gamma_n(t)-t\right|$ is continuous the maximum is
attained at some critical point (exposed point) $r_j$, so
$\rho_T(\gamma_n,e)<\rho_T(\lambda_n,e)\vee\rho_T(\mu_n, e)<\eps$.  Now,
\begin{equation*}
\begin{split}\label{whole}
\rho_T(F(x_n,y_n,&c_n), F(x,y,0)\circ\gamma_n)\\
&\leq \rho_T\left(\left(y_n-y_n^-+H(x_n,y_n,c_n)\right)^\uparrow,\left(\left(y-y^-+H(x,y,0)\right)^\uparrow\right)\circ \gamma_n\right)\\
&\qquad+\rho_T\left(H(x_n,y_n,c_n),\left(H(x,y,0)\right)\circ \gamma_n\right).
\end{split}
\end{equation*}
For the first term we have
\begin{equation}\label{eq:whole1}
\begin{split}
&\rho_T\left(\left(y_n-y_n^-+H(x_n,y_n,c_n)\right)^\uparrow,\left(\left(y-y^-+H(x,y,0)\right)^\uparrow\right)\circ \gamma_n\right)\\
&\leq \rho_T\left(y_n-y_n^-+H(x_n,y_n,c_n),\left(y-y^-+H(x,y,0)\right)\circ \gamma_n\right),
\end{split}
\end{equation}
and
\begin{multline}\label{eq:whole2}
\rho_T\left(y_n-y_n^-+H(x_n,y_n,c_n),\left(y-y^-+H(x,y,0)\right)\circ \gamma_n\right)\\
\leq  \rho_T\left(y_n,y\circ\gamma_n\right)+\rho_T\left(y_n^-,y^-\circ\gamma_n\right)+\rho_T\left(H(x_n,y_n,c_n),H(x,y,0)\circ \gamma_n\right).
\end{multline}
Since $\gamma_n$ is strictly increasing,
\begin{equation*}
\begin{split}
\rho_T\left(y_n^-,y^-\circ\gamma_n\right)&=\sup_{0\leq t\leq T}\left|\lim_{s\nearrow t}y_n(s)-\lim_{r\nearrow \gamma_n(t)}y(r)\right|\\
&=\sup_{0\leq t\leq T}\left|\lim_{s\nearrow t}y_n(s)-\lim_{r\nearrow t}y(\gamma_n(r))\right|,
\end{split}
\end{equation*}
and so
\begin{equation*}
\rho_T\left(y_n^-,y^-\circ\gamma_n\right)\leq\sup_{0\leq t\leq T}\left|y_n(t)-y(\gamma_n(t))\right|,
\end{equation*}
since the left limit of $y_n$ and $y\circ \gamma_n$ exist at each $t$.  Therefore,
\begin{equation}\label{eq:whole3}
\rho_T\left(y_n^-,y^-\circ\gamma_n\right)\leq\rho_T\left(y_n,y\circ\gamma_n\right)
\end{equation}
Combining (\ref{whole},\ref{eq:whole1},\ref{eq:whole2},\ref{eq:whole3}) we have,
\begin{equation*}
\begin{split}
&\rho_T(F(x_n,y_n,c_n), F(x,y,0)\circ\gamma_n)\\
&\leq\rho_T\left(\left(y_n-y_n^-+H(x_n,y_n,c_n)\right)^\uparrow,\left(\left(y-y^-+H(x,y,0)\right)^\uparrow\right)\circ \gamma_n\right)\\
&\qquad +\rho_T\left(H(x_n,y_n,c_n),H(x,y,0)\circ \gamma_n\right)\\
&\leq  2\rho_T\left(y_n,y\circ\gamma_n\right)+2\rho_T\left(H(x_n,y_n,c_n),H(x,y,0)\circ \gamma_n\right)\\
&\leq 12\eps,
\end{split}
\end{equation*}
by lemma $\ref{Realign}$.
\end{proof}

\section{Scaling limit of the plateau process}
In this section we prove several results concerning the sequence of models,
and then combine these to prove Theorem \ref{Main Theorem}.  We begin by
showing that the function $H$ scales nicely when no centering is required.

\begin{lem}\label{HScale}
For positive constants $a_n$ and $n$,
\begin{equation*}
a_n^{-1}H(x,y,c)(nt)=H(x^n,y^n,c/n)(t),
\end{equation*}
for all $t\geq0$, where $x^n(t)=a_n^{-1}x(nt)$ and $y^n(t)=a_n^{-1}y(nt).$
\end{lem}
\begin{proof}  By definition,
\begin{equation*}
\begin{split}
a_n^{-1}H(x,y,c)(nt)&=a_n^{-1}\sup_{0\leq s\leq nt}\left(x(s)-y(\left[s-c\right]^+)\right)\\
&=\sup_{0\leq s\leq t}\left(a_n^{-1}x(ns)-a_n^{-1}y(\left[ns-c\right]^+)\right)\\
&=\sup_{0\leq s\leq t}\left(a_n^{-1}x(ns)-a_n^{-1}y(n\left[s-c/n\right]^+)\right)\\
&=\sup_{0\leq s\leq t}\left(x^n(s)-y^n(\left[s-c/n\right]^+)\right)\\
&=H(x^n,y^n,c/n)(t)
\end{split}
\end{equation*}
\end{proof}

\begin{lem}\label{NoNegativeJumps}
The set $\mathscr K=\{x\in \mathbb{D}:x(t)-x(t-)\geq0 \text{ for each }t\in (0,\infty)\}$ is closed in $\mathbb{D}$.
\end{lem}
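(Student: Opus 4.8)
### Proof plan for Lemma \ref{NoNegativeJumps} ($\mathscr K$ is closed)

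The plan is to show that the complement of $\mathscr K$ is open, equivalently that if $x_n \to x$ in $\mathbb D$ with each $x_n \in \mathscr K$, then $x \in \mathscr K$. So suppose for contradiction that $x$ has a strictly negative jump at some time $t_0 \in (0,\infty)$: say $x(t_0) - x(t_0-) = -2\beta$ for some $\beta > 0$. I want to produce, for large $n$, a strictly negative jump in $x_n$, contradicting $x_n \in \mathscr K$. The idea is that $J_1$-convergence preserves the \emph{size} of jumps at continuity points of the limit in a neighborhood sense: a jump of $x$ of size $-2\beta$ at $t_0$ must be ``matched'' by a jump of $x_n$ of size close to $-2\beta$ at some nearby time $s_n \to t_0$.

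First I would fix $T > t_0$ and invoke Proposition 3.5.3 of \cite{ethier2009markov}: there exist $\lambda_n \in \Lambda'$ with $\sup_{0\le t\le T}|\lambda_n(t) - t| \to 0$ and $\sup_{0\le t\le T}|x_n(t) - x(\lambda_n(t))| \to 0$. Let $s_n = \lambda_n^{-1}(t_0)$, so $s_n \to t_0$. Since $\lambda_n$ is strictly increasing, for $s < s_n$ we have $\lambda_n(s) < t_0$, and for $s \ge s_n$ we have $\lambda_n(s) \ge t_0$. The plan is then to estimate $x_n(s_n) - x_n(s_n-)$ by comparing to $x \circ \lambda_n$:
\begin{equation*}
x_n(s_n) - x_n(s_n-) = \bigl(x_n(s_n) - x(\lambda_n(s_n))\bigr) + \bigl(x(\lambda_n(s_n)) - x(\lambda_n(s_n)-)\bigr) - \bigl(x_n(s_n-) - x(\lambda_n(s_n)-)\bigr).
\end{equation*}
The middle term equals $x(t_0) - x(t_0-) = -2\beta$ since $\lambda_n(s_n) = t_0$. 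The first term is at most $\sup_{0\le t\le T}|x_n(t) - x(\lambda_n(t))|$ in absolute value, which tends to $0$. The third term is the delicate one: $x_n(s_n-)$ is a left limit, and I need it to be close to $x(t_0-) = x(\lambda_n(s_n)-)$.

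The main obstacle is controlling that third term, i.e.\ showing $|x_n(s_n-) - x(\lambda_n(s_n)-)| \to 0$. Because $x$ has no jump in $(t_0 - \delta, t_0)$ for $\delta$ small enough (isolated jump; $\mathbb D$ functions have at most countably many jumps, but more to the point, for any $\eta>0$ one can choose $\delta$ with the oscillation of $x$ on $[t_0-\delta,t_0)$ less than $\eta$ by the standard modulus argument, or simply pick $\delta$ so that $t_0$ is the only jump of size exceeding $\eta$ in a neighborhood and shrink), one has $x(r) \to x(t_0-)$ as $r \uparrow t_0$ uniformly in the sense needed. Concretely: choose $\delta>0$ with $\sup_{t_0-\delta \le r < t_0}|x(r) - x(t_0-)| < \eta$. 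For $n$ large, $s_n - \delta/2 < s_n$ and $\lambda_n$ maps $(s_n - \delta/2, s_n)$ into a subinterval of $(t_0 - \delta, t_0)$, so $|x(\lambda_n(s)) - x(t_0-)| < \eta$ there; combined with $\sup|x_n(s) - x(\lambda_n(s))| \to 0$, we get $|x_n(s) - x(t_0-)| < 2\eta$ for $s$ in a left-neighborhood of $s_n$, hence $|x_n(s_n-) - x(t_0-)| \le 2\eta$. Since $\eta$ is arbitrary the third term vanishes in the limit. Therefore $x_n(s_n) - x_n(s_n-) \to -2\beta < 0$, so for large $n$ we have $x_n(s_n) - x_n(s_n-) < -\beta < 0$, contradicting $x_n \in \mathscr K$. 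Hence $\mathscr K$ is closed. (An alternative, cleaner route: note $\mathscr K$ consists exactly of the functions that are pointwise limits of nondecreasing-increment sums and invoke that $x \mapsto \inf_{s} (x(s)-x(s-))$ is upper semicontinuous on $\mathbb D$ with the $J_1$ topology; but the direct $\eps$–$\delta$ argument above is self-contained given Proposition 3.5.3.)
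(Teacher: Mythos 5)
Your argument is correct and follows the same route as the paper: both rest on the fact that a jump of the $J_1$-limit at $t_0$ is matched by jumps of $x_n$ at times $s_n\to t_0$ with converging sizes, so a negative jump of $x$ would force a negative jump of some $x_n$. The only difference is that the paper simply cites Proposition VI.2.1 of Jacod--Shiryaev for this jump-matching fact, whereas you prove it directly from the Ethier--Kurtz characterization via the time changes $\lambda_n$; your $\eps$--$\delta$ control of the left-limit term is sound, so the self-contained version works.
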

\begin{proof}
Let $\{x_n\}$ be a sequence in $\mathscr{K}$ such that $x_n\to x$.  Fix
$t_0\in (0,\infty)$ with $x(t_0)-x(t_0-)\neq0$.  There exists $t_n\to t_0$
with $x_n(t_n)-x_n(t_n-)\to x(t_0)-x(t_0-)$ by \cite{jacod1987limit}
proposition VI.2.1.  We have $x_n(t_n)-x_n(t_n-)\geq 0$ for each $n$ since
$x_n\in \mathscr{K}$, so $x(t_0)-x(t_0-)\geq 0$ and we must have $x\in
\mathscr K$.
\end{proof}

The next Lemma establishes a joint convergence involving the primitive input
processes.  Recall that $\check U^r\Rightarrow U^*$ and $\check V^r\Rightarrow
V^*$ in $\mathbb D$.
\begin{lem}\label{JointConvergence}
For any sequence of real numbers $c_r\to c$,
\begin{equation*}
(\check U^r+c_re,\check V^r,1/r)\Rightarrow (U^*+ce,V^*,0),
\end{equation*}
in the product topology on $\mathbb{D}\times \mathbb{D}\times \R$.  Moreover,
\begin{equation*}
\text{Disc}(U^*+ce)\cap\text{Disc}(V^*)=\varnothing \text{ a.s.}
\end{equation*}
 and $\{t:V^*(t)-V^*(t-)<0\}=\varnothing \text{a.s.}$
\end{lem}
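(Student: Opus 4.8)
The plan is to establish the joint weak convergence by upgrading the marginal convergences $\check U^r \Rightarrow U^*$ and $\check V^r \Rightarrow V^*$ (which hold in $\mathbb D$ by the asymptotic assumptions) to a joint statement, and then handle the continuous perturbations $+c_re$ and the vanishing shift $1/r$ by a continuous-mapping argument. The key point that makes joint convergence automatic is \emph{independence}: the sequences $\{u_i^r\}$ and $\{v_i^r\}$ are independent for each $r$, so $\check U^r$ and $\check V^r$ are independent random elements of $\mathbb D$, and the limits $U^*$, $V^*$ are independent L\'evy stable motions. Hence $(\check U^r, \check V^r) \Rightarrow (U^*, V^*)$ in $\mathbb D \times \mathbb D$ follows from the marginal convergences together with independence (for product-measure sequences, marginal convergence implies joint convergence, using e.g.\ the fact that weak convergence is preserved under products of measures, or Prokhorov's theorem plus identification of finite-dimensional limits). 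I would cite this or give the one-line tightness-plus-finite-dimensional-distributions argument.

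Next I would incorporate the deterministic pieces. Define the map $\Phi: \mathbb D \times \mathbb D \times \R \to \mathbb D \times \mathbb D \times \R$ by $\Phi(x,y,a) = (x + ae, y, a)$ — this is continuous at every point (adding a continuous function $ae$ to $x$ is continuous in the $J_1$ topology since $e$ is continuous, and the third coordinate is untouched). Since $c_r \to c$, an application of the continuous mapping theorem to $(\check U^r, \check V^r, 1/r) \Rightarrow (U^*, V^*, 0)$ — where $1/r \to 0$ deterministically so this triple converges jointly — would not quite work directly because $c_r$ varies with $r$; instead I would use a converging-together argument: $\check U^r + c_r e$ and $\check U^r + c e$ differ uniformly on $[0,T]$ by $|c_r - c|\,T \to 0$, so by the triangle inequality in the $J_1$ metric (or Slutsky-type reasoning) $(\check U^r + c_r e, \check V^r, 1/r) \Rightarrow (U^* + ce, V^*, 0)$. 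Alternatively, treat $c_r$ as part of the input: $(\check U^r, \check V^r, c_r, 1/r) \Rightarrow (U^*, V^*, c, 0)$ jointly (the scalars converge deterministically, so this follows from the $\mathbb D \times \mathbb D$ convergence), then apply the continuous map $(x,y,b,a) \mapsto (x + be, y, a)$.

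For the two almost-sure disjointness/sign claims: $V^*$ is a L\'evy stable motion arising as the limit of the partial-sum processes $\check V^r$ built from strictly positive service times $v_i^r$; the centering subtracts a linear (continuous) drift, so all jumps of $\check V^r$ are upward, i.e.\ $\check V^r \in \mathscr K$. By Lemma \ref{NoNegativeJumps}, $\mathscr K$ is closed in $\mathbb D$, hence by the Portmanteau theorem $\mathbb P\{V^* \in \mathscr K\} \ge \limsup_r \mathbb P\{\check V^r \in \mathscr K\} = 1$, giving $\{t : V^*(t) - V^*(t-) < 0\} = \varnothing$ a.s. For the disjointness of discontinuity sets: $U^* + ce$ has the same jump times as $U^*$ (adding $ce$ is continuous), and $U^*$, $V^*$ are \emph{independent} L\'evy processes; two independent L\'evy processes almost surely have no common jump times, since the jump time of a L\'evy process is a stopping time with a diffuse (non-atomic) distribution over $(0,\infty)$ — formally, conditioning on the jump structure of $V^*$, the countable jump-time set of $V^*$ is fixed, and an independent L\'evy process $U^*$ hits any fixed countable set with probability zero. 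I would phrase this via the independence of the Poisson random measures of jumps, or simply cite the standard fact (e.g.\ Sato, or Bertoin). I expect this last step — the a.s.\ non-coincidence of jumps — to be the only place requiring more than bookkeeping, though it is a well-known fact; the main genuine obstacle is really just ensuring the joint convergence is correctly justified via independence rather than assumed, and carefully routing the vanishing shift $1/r$ and drift $c_r e$ through a continuous mapping without needing continuity of $F$ or $H$ at this stage.
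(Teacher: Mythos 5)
Your proposal is correct and follows essentially the same route as the paper: joint convergence of the pair via independence of $\check U^r$ and $\check V^r$ (the paper cites Whitt's Theorem 11.4.4), the drift $c_re$ absorbed by continuity of addition at continuous functions, the Portmanteau argument with Lemma \ref{NoNegativeJumps} for the absence of negative jumps, and the a.s.\ disjointness of jump times from independence plus the absence of fixed discontinuities of a stable L\'evy motion (the paper cites Whitt 1980, Lemma 4.3, for the fact you prove by conditioning). The only difference is cosmetic: the paper adds $c_re$ to $\check U^r$ before forming the product, whereas you form the product first.
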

\begin{proof}
Since $ce$ is continuous, $\check U^r\Rightarrow U^*$, and $c_re\Rightarrow
ce$ we have  $\check U^r+c_re\Rightarrow U^*+ce$ by \cite{whitt1980some}.  We
have joint convergence $(\check U^r+c_re,\check V^r)\Rightarrow (U^*+ce,V^*)$
since $\check V^r$ is independent of $\check U^r$ and therefore $\check
U^r+c_re$ is independent of $\check V^r$ because $c_r$ is constant in
$\omega$, \cite{whitt2002stochastic} Theorem 11.4.4, moreover $U^*$ is
independent of $V^*$.  Since $1/r$ is constant in $\omega$ we have $1/r\to 0$
in probability so \cite{billingsley1968convergence} Theorem 4.4 gives joint
convergence
\begin{equation*}
(\check V^r+c_re,\check U^r,1/n)\Rightarrow (U^*+ce,V^*,0).
\end{equation*}
$V^*$ is a stable L\'evy motion by 2.4.1 of the online supplement to
\cite{whitt2002stochastic}.  So $V^*$ has no fixed discontinuities:
$\probability{U^*(t)=U^*(t-)}=1$ for all $t\in (0,\infty)$.   By
\cite{whitt1980some} Lemma 4.3, gives
$\probability{\text{Disc}(U^*)\cap\text{Disc}(V^*)=\varnothing }=1$ and since
$ce$ is continuous we have
\begin{equation*}
\probability{\text{Disc}(U^*+ce)\cap\text{Disc}(V^*)=\varnothing }=1.
\end{equation*}
Finally, $\probability{\check V^r\in \mathscr{K}}=1$, $\check V^r\Rightarrow V^*$, and $\mathscr{K}$ is closed by Lemma \ref{NoNegativeJumps}, so the Portmanteau theorem gives 
\begin{equation*}
\probability{V^*\in \mathscr K}\geq\limsup_{n\to\infty}\probability{\check V^r\in \mathscr{K}}=1.
\end{equation*}
\end{proof}

For each $r>0$ and $t\geq 0$ define $\bar D^r(t)=\frac{1}{r}D^r(rt)$.  Using
Corollary \ref{coro:RwithH} under this fluid scaling, we have for all $t\geq
0$,
\begin{equation*}
\bar R^r(t)=\frac{1}{r}R(rt).
\end{equation*}
We will need the fluid limit of $\bar D^r(\cdot)$.
\begin{lem}\label{lem:RnScale}
As $r\to\infty$,
\begin{equation*}
\bar R^r\Rightarrow e/\mu
\end{equation*}
\end{lem}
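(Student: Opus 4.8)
The plan is to show $\bar R^r \Rightarrow e/\mu$ by first establishing the fluid limit of $\bar D^r$ and then inverting. The workhorse is Corollary \ref{coro:RwithH}, which gives $D^r(n) = V^r(n) + H(U^r,V^r,1)(n)$ (with the obvious superscripted notation), so that under fluid scaling $\bar D^r(t) = \frac{1}{r}\bigl(V(r\lfloor t\rfloor) + H(U,V,1)(r\lfloor t\rfloor)\bigr)$, up to the error caused by replacing $rt$ with $r\lfloor rt/r\rfloor$ inside the definition \eqref{defn:U} of $U,V$ (which is of order $1/r \to 0$ uniformly on compacts). First I would apply the functional LLN implicit in the asymptotic assumptions: since $\check U^r(1) \Rightarrow \mathcal U^*$ with $r/a_r\to\infty$, the fluid-scaled processes satisfy $\bar U^r \Rightarrow \mu e$ and $\bar V^r \Rightarrow \mu e$ in $\mathbb D$ (using $\mu^r,\nu^r \to \mu$). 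This is because $\bar U^r(t) = \frac{a_r}{r}\check U^r(t) + \mu^r \lfloor rt\rfloor / r$, and $\frac{a_r}{r}\check U^r \Rightarrow 0$ since $\check U^r \Rightarrow U^*$ and $a_r/r\to 0$; similarly for $\bar V^r$.

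Next I would handle the $H$ term. By Lemma \ref{HScale} with $a_n = r$, $n = r$, $c = 1$, we have $\frac{1}{r}H(U,V,1)(rt) = H(\bar U^r, \bar V^r, 1/r)(t)$. Now $1/r \to 0$, and by Lemma \ref{GisContinuous} (continuity of $G$ at $(x,0)$) together with continuity of the supremum map, $H$ is continuous at $(x,y,0)$ when $\mathrm{Disc}(x)\cap\mathrm{Disc}(y) = \varnothing$; since the limits $\mu e$ and $\mu e$ are continuous, this holds trivially. So by the continuous mapping theorem, $H(\bar U^r,\bar V^r,1/r) \Rightarrow H(\mu e, \mu e, 0)$. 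But $H(\mu e,\mu e,0)(t) = \sup_{0\le s\le t}(\mu s - \mu[s-0]^+) = \sup_{0\le s\le t} 0 = 0$. Hence $\frac{1}{r}H(U,V,1)(r\cdot) \Rightarrow 0$, and combining with $\bar V^r \Rightarrow \mu e$ gives $\bar D^r \Rightarrow \mu e$. (I would be slightly careful here: $U^*$ and $V^*$ may themselves be needed for the joint statement, but since both fluid limits are deterministic, joint convergence to the pair $(\mu e, \mu e)$ is automatic, so the CMT applies to the pair.)

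Finally, $\bar R^r$ is the generalized inverse of $\bar D^r$ in the sense of Corollary \ref{coro:RwithH}: $R^r(t) = \max\{m\ge 0: D^r_m \le t\}$, so $\bar R^r(t) = \frac{1}{r}\max\{m: \bar D^r(m/r) \le t\}$. Since the limit $\mu e$ is continuous, strictly increasing, and unbounded, I would invoke the standard inverse-map continuity result (e.g. Whitt \cite{whitt2002stochastic} Theorem 13.6.1, or Billingsley \cite{billingsley1968convergence}) to conclude $\bar R^r \Rightarrow (\mu e)^{-1} = e/\mu$. The main obstacle, and the only place requiring genuine care, is matching the discrete index $m$ in $R^r$ with the continuous argument in $\bar D^r$ — i.e. justifying that $\bar R^r$ really is close to the continuous inverse of $\bar D^r$ despite the floor functions in \eqref{defn:U} — but since the floor introduces only an $O(1/r)$ discrepancy and the limiting inverse is Lipschitz, this is routine and does not affect the limit.
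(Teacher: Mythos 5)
Your proposal is correct and follows essentially the same route as the paper: fluid limits $\bar U^r,\bar V^r\Rightarrow\mu e$ from the asymptotic assumptions, Lemma \ref{HScale} to rescale the idleness term, continuity of $H$ at $(\mu e,\mu e,0)$ to conclude $\bar V^r+H(\bar U^r,\bar V^r,1/r)\Rightarrow\mu e$, and the inverse-map continuity theorem from Whitt to invert. The only cosmetic difference is that the paper keeps everything inside the supremum defining $\bar R^r$ rather than passing explicitly through $\bar D^r$.
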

\begin{proof}
$\check U^r(1)\Rightarrow U^*(1)$ implies $\frac{r}{a_r}\left(\bar U^r(1)-\mu^r\right)\Rightarrow U^*(1)$, but $r/a_r\to\infty$ implies $\bar U^r(1)-\mu_r\Rightarrow 0$.  Since $\mu^r\to \mu$ we have $\bar U^r(1) \Rightarrow \mu$.  By Theorem 2.4.1 of the internet supplement to \cite{whitt2002stochastic}, we have $\bar U^r\Rightarrow \mu e$ in $\mathbb{D}$.  Similarly, $\bar V^r\Rightarrow \mu e$ in $\mathbb D$.  Now compute
\begin{equation*}
\begin{split}
\bar R^r(t)&=\frac{1}{r}\sup\left\{m\geq0:V^r(m)+H(U^r,V^r,1)(m)\leq rt\right\}\\
&=\sup\left\{x/r\geq 0:V^r(x)+H(U^r,V^r,1)(x)\leq rt\right\}\\
&=\sup\left\{x/r\geq 0:\frac{V^r(x)}{r}+\frac{1}{r}H(U^r,V^r,1)(x)\leq t\right\}\\
&=\sup\left\{y\geq 0:\frac{V^r(ry)}{r}+\frac{1}{r}H(U^r,V^r,1)(ry)\leq t\right\}\\
&=\sup\left\{y\geq 0:\bar V^r(y)+H(\bar U^r,\bar V^r,1/r)(y)\leq t\right\},
\end{split}
\end{equation*}
by lemma $\ref{HScale}$.  We have $(\bar U^r,\bar V^r,1/r)\Rightarrow (\mu
e,\mu e, 0)$ in $\mathbb{D}$ since the processes are independent.  The
function $H$ is continuous at $(\mu_u e,\mu_v e, 0)$, and addition is
continuous at continuous elements of $\mathbb D$, so 
\begin{equation*}
\bar V^r+H(\bar U^r,\bar V^r,1/r)\Rightarrow \mu e
\end{equation*} 
in $\mathbb{D}$.  The result follows because $\mu e$ is in the set of
continuity for the function $x\mapsto \sup\{y\geq 0:x(y)\leq t\}$ by Corollary
13.6.4 in \cite{whitt2002stochastic}.
\end{proof}

We now prove the main result.

\noindent{\it Proof of Theorem \ref{Main Theorem}.} By Lemma \ref{lem:M=F} 
\begin{equation*}
M(t)=F(U^r,V^r,1)(R(t)).
\end{equation*}
Under fluid scaling $\bar R^r\Rightarrow e/\mu$ by \ref{lem:RnScale}.  We
first consider the scaling limit for $F$, before composing with $R$.
\begin{equation*}
\begin{split}
a_r^{-1}F(U^r,&V^r,1)(rt)=a_r^{-1}\sup_{0\leq s\leq rt}\left(V^r(s)-V^r(s-)+H(U^r,V^r,1)(s)\right)\\
&\qquad\qquad-a_r^{-1}H(U^r,V^r,1)(rt)\\
&=\sup_{0\leq s\leq rt}\left(a_r^{-1}V^r(s)-a_r^{-1}V^r(s-)+a_r^{-1}H(U^r,V^r,1)(s)\right)\\
&\qquad\qquad-a_r^{-1}H(U^r,V^r,1)(rt)\\
&=\sup_{0\leq s\leq t}\left(a_r^{-1}V^r(rs)-a_r^{-1}V^r(rs-)+a_r^{-1}H(U^r,V^r,1)(rs)\right)\\
&\qquad\qquad-a_r^{-1}H(U^r,V^r,1)(rt).
\end{split}
\end{equation*}
$t\mapsto r\nu^rt$ is continuous so $r\nu^r(rs)-r\nu^r(rs-)=0$ and
\begin{equation}\label{express F}
\begin{split}
a_r^{-1}F(U^r,V^r,1)(rt)&=\sup_{0\leq s\leq t}\left(\check V^r(s)-\check V^r(s-)+a_r^{-1}H(U^r,V^r,1)(rs)\right)\\
&\qquad\qquad-a_r^{-1}H(U^r,V^r,1)(rt).
\end{split}
\end{equation}
Now, we address the idleness part of \eqref{express F} that occurs twice.

\begin{equation*}
\begin{split}
a_r^{-1}&H(U^r,V^r,1)(rt)\\
&=a_r^{-1}\left.\left.\sup_{0\leq s\leq rt}\right(U^r(s)-V^r([s-1]^+)\right)\\
&=\left.\left.\sup_{0\leq s\leq t}\right(a_r^{-1}U^r(rs)-a_r^{-1}V^r(r[s-1/r]^+)\right)\\
&=\sup_{0\leq s\leq t}\bigg(a_r^{-1}\left(U^r(rs)-r\mu^rs\right)+a_r^{-1}r\mu^rs\\
&\qquad-a_r^{-1}\left(V^r(r[s-1/r]^+)-r\nu^r[s-1/r]^+\right)-a_r^{-1}r\nu^r[s-1/r]^+\bigg)\\
&=\sup_{0\leq s\leq t}\bigg(\check U^r(s)+a_r^{-1}r\mu^r s -\check V^r([s-1/r]^+)-a_r^{-1}r\nu^r[s-1/r]^+\bigg)\\
&=\sup_{0\leq s\leq t}\bigg(\check U^r(s)+a_r^{-1}r(\mu^r-\nu^r)s+a_r^{-1}r\nu^r(s-[s-1/r]^+)\\
&\qquad -\check V^{r}([s-1/r]^+)\bigg).
\end{split}
\end{equation*}
Since 
\begin{equation*}
a_r^{-1}r\nu^r(s-[s-1/r]^+)=a_r^{-1}r\nu^r(1/r\wedge s)=a_r^{-1}\nu^r(1\wedge rs),
\end{equation*}
 we have
\begin{equation*}
\begin{split}
a_r^{-1}&H(U^r,V^r,1)(rt)\\
&=H(\check U^r+a_r^{-1}r(\mu^r-\nu^r)e+a_r^{-1}\nu^r(1\wedge re),\check V^{r},1/r)(t).
\end{split}
\end{equation*}

Putting this expression back into \eqref{express F},
\begin{equation*}
\begin{split}
a_r^{-1}&F(U^r,V^r,1)(rt)=\sup_{0\leq s\leq t}\left[\check V^r(s)-\check V^r(s-)\right.\\
&\left.+H(\check U^r+a_r^{-1}r(\mu^r-\nu^r)e+a_r^{-1}\nu^r(1\wedge re),\check V^{r},1/r)(s)\right]\\
&-H(\check U^r+a_r^{-1}r(\mu^r-\nu^r)e+a_r^{-1}\nu^r(1\wedge re),\check V^{r},1/r)(t)\\
&=F(\check U^r+a_r^{-1}r(\mu^r-\nu^r)e+a_r^{-1}\nu^r(1\wedge re),\check V^r,1/r)(t).
\end{split}
\end{equation*}
By Lemma \ref{JointConvergence} we have $(U^*+\gamma\mu e,V^*,0)$ satisfies
the continuity criterion of Lemma \ref{FisContinuous}.  By the continuous
mapping theorem
\begin{equation*}
F(\check U^r+a_r^{-1}r(\mu^r-\nu^r)e+a_r^{-1}\nu^r(1\wedge re),\check V^r,1/r)\Rightarrow F(U^*+\gamma\mu e,V^*,0).
\end{equation*}
Finally, the scaled plateau process is a composition of $F$ with $R$,
\begin{equation*}
a_r^{-1}F(U^r,V^r,1)(R(rt))=a_r^{-1}F(U^r,V^r,1)(r\bar R^r(t)).
\end{equation*}
Composition is continuous on $(\mathbb{D}\times C_0)$ by \cite{whitt1980some}
Theorem 3.1, where $C_0\subset \mathbb D$ denotes the strictly increasing,
continuous functions.  So the continuous mapping theorem yields
\begin{equation*}
a_r^{-1}M^r(r\cdot)=\check M^r\Rightarrow M^*= F(U^*+\gamma \mu e,V^*,0)(\cdot/\mu).
\end{equation*}
{\hfill$\blacksquare$}

\section{Analysis of the limit process}\label{s.LimitAnalysis}

In this section we derive, for certain cases, some properties of the
stochastic process $M^*$ that appears as the scaling limit of the plateau
process. We focus on cases where the interarrival time distribution has finite
variance (but the service time distribution still has infinite variance),
leading to a trivial limit for the arrival process $U^*(t)\equiv 0$ and a
non-trivial $\alpha$-stable process $V^* $ for the limit of the service
process. By Theorem \ref{Main Theorem}, the limit of the plateau process is
then 
\[ M^*(t)= F(\gamma\mu e,V^*,0)(t/\mu), \qquad t\ge0. \]

Although this process is not Markov, a suitable time change of it is and has
one-dimensional distributions that can be derived explicitly. The time change
is simply an inverse local time of the reflected (at zero) version of the
process $V^*(t)-\gamma\mu t$, $t\ge0$. More explicitly, letting $X(t)
= V^*(t)-\gamma \mu t$ and  $\underline{X}(t)=\inf_{0\le s\le t}X(s)$, the
process $L(t)=-\underline{X}(t)$ is the local time at zero for the reflected
process $Y(t)=X(t)-\underline{X}(t)$ associated with $X$. We use its
right-continuous inverse $L^{-1}$ to define the time-changed version
\[  Z(v)=M^*(\mu L^{-1}(v)), \qquad v\ge 0, \]
of our limit process. The one-dimensional distributions of $Z$ are given by
the following. 

\begin{thm}\label{thrm2main}
If the limiting arrival process is identically zero, then for each $v\ge0$,
the distribution function $F_v$ of $Z(v)$ is given by
\begin{equation} \label{e.thrm2main}
  F_v(y)=\exp\left(-\int_y^{y+v}\frac{\kappa(q)}{q}dq\right), \qquad y\ge0,
\end{equation}
where $\kappa(q)/q=\phi_q(c_\alpha q^{-\alpha}/\alpha)$, $\phi_q$ is the
right-inverse of 
\[ s\mapsto s+s^\alpha + c_\alpha
\int_q^\infty (1-e^{-sx}) x^{-\alpha-1 } dx, \]
and $c_\alpha$ is an explicit constant (see below). 
\end{thm}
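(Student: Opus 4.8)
The plan is to recognize $M^*$, in this special case, as an explicit functional of a reflected spectrally one‑sided stable process, then to reduce the one–dimensional laws of the time change $Z$ to a single excursion‑measure quantity, and finally to evaluate that quantity by stripping off large jumps. First I would make the identification explicit. Since $\gamma\mu e$ is continuous, $H(\gamma\mu e,V^*,0)(t)=\sup_{0\le s\le t}(\gamma\mu s-V^*(s))=-\underline X(t)=L(t)$ with $X=V^*-\gamma\mu e$, and $V^*(s)-V^*(s-)=X(s)-X(s-)=:\Delta X(s)$, so that
\[
  M^*(\mu\tau)=F(\gamma\mu e,V^*,0)(\tau)=\sup_{0\le s\le\tau}\bigl(\Delta X(s)+L(s)\bigr)-L(\tau),\qquad\tau\ge0 .
\]
As $X$ is spectrally positive it creeps downward, so $L$ is continuous, $L(L^{-1}(v))=v$, and $Z(v)=\sup_{0\le s\le L^{-1}(v)}(\Delta X(s)+L(s))-v$. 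It is convenient to pass to the dual $\xi:=-X=\gamma\mu e-V^*$, a spectrally negative $\alpha$‑stable L\'evy process with positive drift (consistent with the Laplace exponent in the statement having the term $s$; here $\gamma>0$): then $Y=X-\underline X=\overline\xi-\xi$ is $\xi$ reflected at its running supremum $\overline\xi$, and $L=-\underline X=\overline\xi$ is the standard local time at $0$ of $Y$, with $L\uparrow\infty$ a.s.

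Next I would carry out the excursion‑theoretic reduction. Because $X$ has no negative jumps, a jump time of $X$ can never lie on $\{Y=0\}$: it lies inside, or at the start of, an excursion interval of $Y$ away from $0$, on which $L$ equals the local‑time level $w$ of that excursion. Writing the excursions of $Y$ as a Poisson point process $\sum_i\delta_{(w_i,\epsilon_i)}$ on $(0,\infty)\times\mathcal E$ with intensity $dw\,n(d\epsilon)$ ($n$ the It\^o excursion measure), and letting $m(\epsilon)$ be the size of the largest jump of the excursion path $\epsilon$ (with $\epsilon(0-)=0$, so a jump‑initiated excursion counts its initial value), one obtains, for $\tau=L^{-1}(v)$,
\[
  \sup_{0\le s\le L^{-1}(v)}\bigl(\Delta X(s)+L(s)\bigr)=v\vee\sup\{w_i+m(\epsilon_i):w_i\le v\},
\]
the bare $v$ coming from the non‑jump times where $\Delta X=0$ (so $Z(v)\ge0$). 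Hence for $y\ge0$ the event $\{Z(v)\le y\}$ is precisely the event that no excursion at level $w\le v$ has $m(\epsilon)>y+v-w$; the number of such exceptional excursions is Poisson with mean $\int_0^v n(m>y+v-w)\,dw=\int_y^{y+v}n(m>q)\,dq$, where $n(m>q):=n(\{\epsilon:m(\epsilon)>q\})<\infty$ for $q>0$, so
\[
  F_v(y)=\mathbb{P}\{Z(v)\le y\}=\exp\left(-\int_y^{y+v}n(m>q)\,dq\right),\qquad y\ge0 .
\]
(For fixed $v$ there is a.s.\ no excursion at exactly level $v$, and the excursion straddling $L^{-1}(v)$ would be the one at level $v$, so the boundary $w=v$ contributes nothing and this identity is unaffected.) Comparing with \eqref{e.thrm2main}, it remains to show $n(m>q)=\kappa(q)/q=\phi_q(c_\alpha q^{-\alpha}/\alpha)$.

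To evaluate $n(m>q)$ — the excursion measure of the excursions of $Y$ that contain a jump larger than $q$, equivalently the excursions during which $\xi$ makes a downward jump of magnitude $>q$ — split the jumps of $\xi$ into those of magnitude $>q$, which by the L\'evy--It\^o decomposition form an independent compound Poisson process whose jumps arrive at rate $\lambda_q=c_\alpha\int_q^\infty x^{-\alpha-1}dx=c_\alpha q^{-\alpha}/\alpha$, and the rest. Let $\hat\xi$ be $\xi$ with the large jumps deleted, a spectrally negative L\'evy process with Laplace exponent $\hat\psi(s)=\psi(s)+c_\alpha\int_q^\infty(1-e^{-sx})x^{-\alpha-1}dx=s+s^\alpha+c_\alpha\int_q^\infty(1-e^{-sx})x^{-\alpha-1}dx$, where $\psi(s)=s+s^\alpha$ is the (suitably normalised — which pins down $c_\alpha$) Laplace exponent of $\xi$; thus $\hat\psi$ is exactly the function whose right‑inverse is $\phi_q$. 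Run $\xi$ up to the time $T\sim\mathrm{Exp}(\lambda_q)$ of its first large jump, with $T$ independent of $\hat\xi$; up to $T$ one has $\xi\equiv\hat\xi$, so every excursion of $Y$ completed strictly before $T$ has only small jumps, while the excursion straddling (or begun at) $T$ contains the large jump at $T$ and is therefore the first excursion with $m>q$; its local‑time level equals $\overline\xi(T)=\overline{\hat\xi}(T)$, since a downward jump does not alter the running supremum. On the one hand, by the Poisson structure of the excursion process, the level of the first excursion with $m>q$ is exponential with rate $n(m>q)$. On the other hand, for the spectrally negative process $\hat\xi$, $\mathbb{P}\{\overline{\hat\xi}(T)>x\}=\mathbb{P}\{\hat\tau_x^+<T\}=\mathbb{E}[e^{-\lambda_q\hat\tau_x^+}]=e^{-\phi_q(\lambda_q)x}$, where $\hat\tau_x^+$ is the first passage of $\hat\xi$ above $x$; that is, $\overline{\hat\xi}(T)$ is exponential with rate $\phi_q(\lambda_q)$. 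Equating the rates gives $n(m>q)=\phi_q(c_\alpha q^{-\alpha}/\alpha)$, and substitution into the display above yields \eqref{e.thrm2main}.

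The delicate part is the excursion‑theoretic set‑up underlying the last two steps: one must invoke that, for the reflected spectrally negative process $Y=\overline\xi-\xi$, the point $0$ is regular and instantaneous, that $\overline\xi$ is a bona fide local time at $0$ with the normalisation consistent with the $L$ of the statement, and that the excursions form a Poisson point process with intensity $dw\,n(d\epsilon)$ — all classical for spectrally negative L\'evy processes but to be cited precisely. A secondary, purely bookkeeping obstacle is to reconcile the stable parameters (the coefficient of $s^\alpha$, the drift $\gamma\mu$, and the constant $c_\alpha$ in the L\'evy density) so that $\psi(s)=s+s^\alpha$ and $\lambda_q=c_\alpha q^{-\alpha}/\alpha$ exactly as written; and, as noted, one should check that the a.s.\ identities in Step~2 are unaffected by the single level $w=v$ and by the excursion straddling $L^{-1}(v)$.
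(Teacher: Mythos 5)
Your proposal is correct, and its first two stages coincide with the paper's: the identification $M^*(\mu\tau)=\sup_{0\le s\le\tau}[X(s)-X(s-)+\underline{X}(\tau)-\underline{X}(s)]$ with $X=V^*-\gamma\mu e$, and the Poisson-point-process reduction giving $F_v(y)=\exp(-\int_y^{y+v}n(\Delta(\varepsilon)>q)\,dq)$, are exactly \eqref{e.Membedded} and \eqref{e.LambdaFormula}. Where you genuinely diverge is in evaluating $n(\Delta(\varepsilon)>q)$. The paper proves a comparison lemma (Lemma \ref{l.comparison}) identifying $n(\Delta(\varepsilon)>q)$ with $\int_{\mathscr{E}}(1-e^{-\beta_q|\varepsilon|})\,d\tilde{n}(\varepsilon)$ by showing both sides equal $1/E[L(\mathrm{e}_q)]$ via two applications of the compensation formula, and then invokes Baurdoux's Equation (3.3) to express the latter as $\lim_{x\downarrow0}x^{-1}(1-E_x[e^{-\beta_q\tau^x_q}])=\phi_q(\beta_q)$. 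You instead observe that the local-time level of the first excursion containing a jump larger than $q$ is, on one hand, exponential with rate $n(\Delta(\varepsilon)>q)$ (first point of a Poisson point process in a set of finite measure), and on the other hand equals $\overline{\hat\xi}(\mathrm{e}_q)$ for the thinned spectrally negative process $\hat\xi$ evaluated at the independent exponential time $\mathrm{e}_q$, which is exponential with rate $\phi_q(\beta_q)$ by the classical first-passage identity $E[e^{-\lambda\hat\tau_x^+}]=e^{-\phi_q(\lambda)x}$; equating rates gives \eqref{e.compB} directly. Your route is more self-contained --- it bypasses both the compensation formula and the external citation of \cite{Baurdoux:2009}, at the price of leaning on spectral negativity of $\xi=-X$ (which holds here) and on the regularity/normalisation facts for the local time $L=\overline{\xi}$ that you correctly flag as needing precise citations. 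The parameter bookkeeping ($\Psi(s)=s+s^\alpha$, $\beta_q=c_\alpha q^{-\alpha}/\alpha$, the implicit sign of $\gamma$) is handled in your proposal exactly as loosely as in the paper, so nothing is lost there.
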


Not only does this result provide some means to perform calculations on the
process $Z$ (and thus on the process $M^*)$, but it also allows us to relate
Theorem \ref{Main Theorem} to the results
obtained in \cite{GTZ1}. In particular, comparing with Theorem 2.2 in
\cite{GTZ1}, we see that the above one-dimensional distributions of our time
changed limit process $Z(v)=M^*(\mu L^{-1}(v))$ are precisely the limiting
laws of the one-dimensional distributions of the process studied in
\cite{GTZ1} (a discrete-time Markov chain embedded in the plateau process), in
which the analogous time change was performed on the original (prelimit)
process before scaling and taking the limit. 

The remainder of this section provides the proof. 

\subsection{Proof of Theorem \ref{thrm2main}}\label{ss.excursionTheory}

Since $U^*\equiv 0$, we are using the function
\begin{equation*}
F(\gamma\mu e,y,0)(t/\mu) = \sup_{0\leq s\leq t/\mu}[y(s)-y(s-)+\sup_{0\leq r\leq
s}[\gamma\mu r-y(r)]]-\sup_{0\leq s\leq t/\mu} [\gamma\mu s-y(s)],
\end{equation*}
where $y$ is replaced by the $\alpha$-stable process $V^*$. Using the
definition of $X(t)$ and $\underline{X}(t)$, this expression reduces to
\begin{equation}\label{e.Membedded}
  \begin{aligned}
  F(\gamma\mu e,V^*,0)(t/\mu) &=\sup_{0\leq s\leq t/\mu}[X(s)-X(s-)-\inf_{0\leq
  r\leq s} X(r)]+\inf_{0\leq s\leq t/\mu} X(s) \\
 &=  \sup_{0\leq s\leq t/\mu}[X(s)-X(s-)+ \underline{X}(t/\mu) - \underline{X}(s)].
\end{aligned}
\end{equation}
Recall that $L(t)=-\underline{X}(t)$ is the local time at zero for the reflected
process $Y(t)=X(t)-\underline{X}(t)$ associated with $X$. For $v\ge0$ define
\begin{equation*}
Z(v) = \sup_{0\leq s\leq L^{-1}(v)}[X(s)-X(s-) -  (v - L(s))],
\end{equation*}
where $L^{-1}$ denotes the right-continuous inverse.
Then from \eqref{e.Membedded} we see that for times $t$ such that
$L(t/\mu)=v$, $M^*(t)=Z(v)$. Put another way, we have for all $t\ge0$ that
$M^*(\mu L^{-1}(L(t/\mu)))=Z(L(t/\mu))$. That is, the process $Z(v)=M^*(\mu
L^{-1}(v))$ is a certain time-changed (and embedded) version of the process
$M^*$, evaluated at times (scaled by $\mu$) when the local time of $Y$
has attained the level $v$. We now examine the one-dimensional distributions
of the process $Z$. 

For each $v\ge0$ we will derive the distribution function $F_v(y)$, $y\ge0$,
of $Z(v)$ using some calculations from excursion theory (note that $Z(v)$ is a
nonnegative random variable).  As in Chapter 4 of Bertoin \cite{Bertoin1996},
define  $N = ((v,\varepsilon(v)), v\geq 0)$ as the Poisson point process of
excursions away from $0$ for the reflected process $Y$. That is,
$(v,\varepsilon(v))$ takes values in $[0,\infty)\times\mathscr{E}$, where
  $\mathscr{E}$ is the space of excursions from zero, and $\varepsilon(v)$
  corresponds to the excursion of $Y$ beginning when its local time has
  attained level $v$.  Let $\ell$ denote Lebesgue measure and denote by $n$
  the excursion measure of $Y$, which is the sigma-finite measure on
  $\mathscr{E}$ such that $\ell\times n$ is the intensity on
  $[0,\infty)\times\mathscr{E}$ of the Poisson random measure $N$.  

Defining $\Delta(v)= \Delta(\varepsilon(v))$ to be the largest jump made
during the excursion $\varepsilon(v)$ (which we set to be 0 if there is no
excursion at $v$), we see that \begin{equation} Z(v) = \sup_{0\leq u\leq v}
  [\Delta (u) - (v-u)].  \end{equation}
Since $N' = \left(\sum_{v} \delta_{(v,\Delta(v))}, v\geq 0\right)$ is a
Poisson point process on $[0,\infty)\times[0,\infty)$, the process $Z$ is
  Markov.  Note that for any $w\in[0,v]$,
\begin{eqnarray*}
Z(v) &=& \max\{ \sup_{0\leq u\leq w} [\Delta (u) - (w-u)] - (v-w), \sup_{w\leq u\leq v}[\Delta (u) - (v-u)]\}\\
 &=& \max\{ Z(w) - (v-w), \sup_{w\leq u\leq v}[\Delta (u) - (v-u)]\}\\
 &\sim&  \max\{ Z(w) - (v-w), \sup_{u \in [0, v-w]} [\Delta (u) -
 u]\}.
\end{eqnarray*}
In particular, taking
$w=0$, we obtain
\begin{equation}
Z(v)  \sim  \sup_{0\leq u\leq v} [\Delta (u) - u].
\end{equation}

Define $A=A_{v,y} = \{(u,\varepsilon)\in[0,\infty)\times\mathscr{E}: u\in
[0,v],\Delta(\varepsilon) > y+u\}$. Then using standard results (e.g.
Section 0.5 of Bertoin \cite{Bertoin1996}), we see that for $y>0$,
\begin{equation}
P(Z(v)>y) = P(N(A) \geq 1).
\end{equation} 
The random variable $N(A)$ is Poisson with mean 
\begin{equation}\label{e.LambdaFormula}
  \lambda(v,y)=(\ell\times n)(A)=\int_0^v n(\Delta (\varepsilon)> y + u)du
  =\int_y^{y+v} n(\Delta (\varepsilon)> q)dq. 
\end{equation}
So the distribution function of $Z(v)$
is $F_v(y)=\exp(-\lambda(v,y))$, $y>0$, which is explicit as long as we can derive an
expression for $n(\Delta (\varepsilon)>q)$ for each $q>0$. 

To this end, fix $q>0$. The idea is to compare the set of excursions with a
jump bigger than $q$ to the set of excursions of a modified process, whose
lifetimes are longer than the exponential waiting time until the first
$q$-jump of the original process. The modified process $\tilde{Y}$ is obtained
from $Y$ by thinning all jumps of size greater than $q$, yielding a L\'evy
process for which the L\'evy measure is now restricted to $[0,q]$, so that we
may apply a formula of Baurdoux \cite{Baurdoux:2009} for excursion lifetimes.  

In more detail, write $X=\tilde{X}+J_q$, where $J_q$ is a pure jump process
independent of $\tilde{X}$ with all jumps greater than $q$, and $\tilde{X}$
almost surely has all jumps bounded by $q$. Define the modified process
$\tilde{Y}(t)=\tilde{X}(t)-\underline{\tilde{X}}(t)$ and let $\tilde{n}$
denote the excursion measure on $\mathscr{E}$ of the process $\tilde{Y}$. 

The Laplace exponent of the
L\'evy process $X$ is $\Psi(s) = s+ s^\alpha$, and the corresponding L\'evy
measure $\nu(dx) = c_\alpha x^{-\alpha-1} dx$, for a strictly positive
constant $c_\alpha$ (an expression is given in Exercise 1.4 of
\cite{Kyprianoubook}). So the L\'evy measures of $\tilde{X}$ and $J_q$ are
$\nu$ restricted to $[0,q]$ and $(q,\infty)$ respectively. The L\'evy exponent
of $\tilde{X}(t)$ can be written as 
\begin{equation}\label{e.ModLevyExp}
  \tilde{\Psi}_q(s) = s+s^\alpha + c_\alpha
\int_q^\infty (1-e^{-sx}) x^{-\alpha-1 } dx.
\end{equation}

Define
\[ \mathrm{e}_q=\inf\{t\ge 0:\ Y(t)-Y(t-)>q\}
  \]
as the waiting time until the first jump of $Y$ of size greater than
$q$. Then $\mathrm{e}_q=\inf\{t\ge0:J_q(t)>J_q(t-)\}$, and since $J_q$ is independent of $\tilde{X}$, the random variable
$\mathrm{e}_q$ is exponential with rate $\beta_q=\nu(q,\infty)=c_\alpha
q^{-\alpha}/\alpha$ and is independent of $\tilde{X}$. 

\begin{lem}\label{l.comparison} For each $q>0$,
  \begin{equation}\label{e.comparison}
  n(\Delta (\varepsilon)>q) =
  \int_{\mathscr{E}}(1-e^{-\beta_q|\varepsilon|})d\tilde{n}(\varepsilon),
\end{equation}
where $|\varepsilon|$ denotes the lifetime of an excursion
$\varepsilon\in\mathscr{E}$. 
\end{lem}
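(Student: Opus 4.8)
The plan is to relate excursions of $Y$ having a jump greater than $q$ to excursions of the thinned process $\tilde Y$, by conditioning on the position of the first $q$-jump of $X$. Recall $X=\tilde X+J_q$ with $J_q$ a pure-jump process, independent of $\tilde X$, carrying all jumps of size greater than $q$, and $\mathrm e_q$ the time of the first such jump, which is exponential with rate $\beta_q$ and independent of $\tilde X$. The key observation is that up to time $\mathrm e_q$ the processes $X$ and $\tilde X$ coincide, and hence so do their reflections $Y$ and $\tilde Y$ and their local times at zero; the first $q$-jump of $Y$ occurs during whichever excursion of $\tilde Y$ is in progress at time $\mathrm e_q$ (or, in the measure-zero event that $\mathrm e_q$ falls at a local-time point, it changes nothing in the excursion-measure computation). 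So an excursion of $Y$ with a jump greater than $q$ corresponds, under this coupling, to an excursion of $\tilde Y$ whose lifetime straddles the independent exponential clock $\mathrm e_q$.

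Concretely, I would work with the Poisson point process $\tilde N=((v,\tilde\varepsilon(v)),\,v\ge0)$ of excursions of $\tilde Y$ away from zero, with intensity $\ell\times\tilde n$. By the independence of $\mathrm e_q$ from $\tilde X$ (hence from $\tilde N$ and from the local time $\tilde L$ of $\tilde Y$), the probability that a given excursion $\tilde\varepsilon$ of lifetime $|\tilde\varepsilon|$ contains the point $\mathrm e_q$ — equivalently, that the exponential clock has not rung before this excursion starts but does ring before it ends — can be computed by the standard memoryless argument: conditionally on the excursion having lifetime $\ell(\tilde\varepsilon)=|\tilde\varepsilon|$, the probability it captures the first $q$-jump is $1-e^{-\beta_q|\tilde\varepsilon|}$. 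Integrating against the excursion measure $\tilde n$ (this is where the exponential formula / compensation formula for the Poisson point process $\tilde N$ enters, cf.\ Section~0.5 and Chapter~IV of Bertoin \cite{Bertoin1996}) gives that the rate at which excursions of $\tilde Y$ carrying a $q$-jump are produced, per unit local time, is $\int_{\mathscr E}(1-e^{-\beta_q|\varepsilon|})\,d\tilde n(\varepsilon)$.

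On the other side, $n(\Delta(\varepsilon)>q)$ is by definition the rate, per unit local time of $Y$, at which excursions of $Y$ with a jump exceeding $q$ occur, which is exactly the intensity in local time of the marked point process recording such excursions. Matching the two descriptions through the coupling $X=\tilde X+J_q$ — using that local time of $Y$ and local time of $\tilde Y$ agree up to the first $q$-jump, and that at the instant a $q$-jump occurs the excursion of $\tilde Y$ in progress is precisely the one that becomes (the initial segment of) the corresponding excursion of $Y$ — yields \eqref{e.comparison}. Here one should note that Baurdoux's formula \cite{Baurdoux:2009} is what makes the right-hand side explicit, but for the statement of the lemma itself it suffices to identify the two intensities.

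The main obstacle is making the coupling argument rigorous at the level of excursion measures rather than probabilities: one has to argue that splicing in the independent exponential mark $\mathrm e_q$ and reading off the excursion of $\tilde Y$ that captures it is compatible with the sigma-finite measure $\tilde n$, i.e.\ that the ``probability'' $1-e^{-\beta_q|\varepsilon|}$ may legitimately be integrated against $\tilde n$ to give $n(\Delta(\varepsilon)>q)$. The cleanest route is to phrase everything via the Poisson point process $\tilde N$ on $[0,\infty)\times\mathscr E$ together with an independent rate-$\beta_q$ Poisson process on $[0,\infty)$ governing the $q$-jumps of $X$ along the local-time clock, superpose them, and apply the marking/thinning theorem for Poisson point processes; the decoration ``this excursion contains a $q$-jump'' is then a measurable thinning whose retention probability, given the excursion, is $1-e^{-\beta_q|\varepsilon|}$, and the displayed identity is immediate. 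A secondary technical point to dispatch is that $\tilde X$ almost surely has all jumps bounded by $q$, so that $\Delta(\tilde\varepsilon)\le q$ for $\tilde n$-a.e.\ excursion and there is no double counting; this is built into the decomposition $\nu = \nu|_{[0,q]}+\nu|_{(q,\infty)}$.
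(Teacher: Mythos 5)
Your proposal is correct in substance and rests on the same coupling $X=\tilde X+J_q$ and the same exploitation of the independence and memorylessness of $\mathrm{e}_q$ as the paper; the difference lies in the machinery used to turn the coupling into an identity of excursion measures. The paper shows that both sides of \eqref{e.comparison} equal $1/E[L(\mathrm{e}_q)]$: it multiplies and divides each side by $E[L(\mathrm{e}_q)]$ and applies the compensation (master) formula of excursion theory, once for $n$ and once for $\tilde n$, reducing each resulting expectation to an almost-sure count of $1$ (for the left side, exactly one excursion of $Y$ with a jump exceeding $q$ begins before $\mathrm{e}_q$; for the right side, a memoryless computation plus the fact that the zero set of $\tilde Y$ is Lebesgue-null). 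You instead invoke a thinning theorem for the excursion point process of $\tilde Y$, marking an excursion when its real-time interval captures a point of the jump-time Poisson process of $J_q$, and then identify the two intensities by matching the first marked excursion of $\tilde Y$ with the first excursion of $Y$ carrying a $q$-jump. That works, and arguably makes the Poisson structure more transparent, at the cost of justifying that retention probabilities $1-e^{-\beta_q|\varepsilon|}$ computed from the realized left endpoints $g$ (hence from the whole configuration) still produce a Poisson thinned process; this is true because, conditionally on the excursion process, the marks are independent across the disjoint excursion intervals with probabilities depending only on the lifetimes. Two points to tighten: the rate-$\beta_q$ Poisson process of $q$-jumps runs in real time, not ``along the local-time clock'' as you write at one point --- the passage to local time is exactly what the retention probability $1-e^{-\beta_q|\varepsilon|}$ encodes; and the final ``matching'' should be stated explicitly as the a.s.\ equality $L(g_1)=\tilde L(g_1)$ of the local times at which the first relevant excursion of each process begins (both exponential, with parameters equal to the two sides of \eqref{e.comparison}), from which the parameters coincide.
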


\begin{proof}
  We show that both expressions are equal to $1/E[L(\mathrm{e}_q)]$. Beginning
  with the left side, multiply and divide by $E[L(\mathrm{e}_q)]$ to obtain
 \[ \begin{aligned}
    n(\Delta (\varepsilon)>q) & =
    \int_{\mathscr{E}}
    1_{\{\Delta(\varepsilon)>q\}}dn(\varepsilon)\\
    & = \frac{1}{E[L(\mathrm{e}_q)]}E\left[ \int_0^\infty\int_{\mathscr{E}}
    1_{\{\Delta(\varepsilon)>q\}} 1_{\{s\le \mathrm{e}_q\}}
    dn(\varepsilon)dL(s)\right].
  \end{aligned}\]
We show the second expectation on the right equals one.  Since the function
$G(s,\omega,\varepsilon)= 1_{\{\Delta(\varepsilon)>q\}} 1_{\{s\le
  \mathrm{e}_q(\omega)\}}$ on $[0,\infty)\times\Omega\times\mathscr{E}$ is
    measurable and almost surely left-continuous in $s$, the compensation
    formula in excursion theory (see Corollary 11 in Section IV.4 of
    \cite{Bertoin1996}) yields
\[
E\left[ \int_0^\infty\int_{\mathscr{E}}
    1_{\{\Delta(\varepsilon)>q\}} 1_{\{s\le \mathrm{e}_q\}}
    dn(\varepsilon)dL(s)\right]
  =E\left[\sum_g  1_{\{\Delta(\varepsilon_g)>q\}} 1_{\{g\le \mathrm{e}_q\}}
    \right],
    \]
where for each sample path, the sum is over the left endpoints of all excursion intervals
$(g,d)$ and $\varepsilon_g$ is the excursion of $Y$ beginning at time $g$. But
since $\mathrm{e}_q$ falls during the first excursion with a jump greater than
$q$, the sum equals one almost surely. 

Turning to the right side of \eqref{e.comparison}, we again multiply and
divide, noting that $E[L(\mathrm{e}_q)]=E[\tilde{L}(\mathrm{e}_q)]$, where
$\tilde{L}(t)=-\underline{\tilde{X}}(t)$ is the local time for $\tilde{Y}$,
because the sample paths of $Y$ and $\tilde{Y}$ are identical up to time
$\mathrm{e}_q$. This gives
\[
   \int_{\mathscr{E}}(1-e^{-\beta_q|\varepsilon|})d\tilde{n}(\varepsilon)
   =\frac{1}{E[L(\mathrm{e}_q)]}
   E\left[ \int_0^\infty\int_{\mathscr{E}}(1-e^{-\beta_q|\varepsilon|})
     1_{\{s\le\mathrm{e}_q\}}d\tilde{n}(\varepsilon)d\tilde{L}(s)\right],
  \]
and we must show the second expectation on the right equals one. Using the
compensation formula,
\[
   E\left[ \int_0^\infty\int_{\mathscr{E}}(1-e^{-\beta_q|\varepsilon|})
     1_{\{s\le\mathrm{e}_q\}}d\tilde{n}(\varepsilon)d\tilde{L}(s)\right]
     = E\left[ \sum_g(1-e^{-\beta_q|\varepsilon_g|})1_{\{g\le\mathrm{e}_q\}}
       \right],
     \]
where this time the sum is over all excursion intervals $(g,d)$ of
$\tilde{Y}$ and $\varepsilon_g$ are the corresponding excursions. Since
$\mathrm{e}_q$ is independent of $\tilde{Y}$, the expectation on the right can
be computed as an iterated integral over $\mathbb{D}\times[0,\infty)$ with respect
  to the product law $P_{\tilde{Y}}\times P_e$ of the random pair
  $(\tilde{Y},\mathrm{e}_q)$. This yields 
  \begin{equation}
    \begin{aligned}
    \nonumber
    E_{\tilde{Y}} E_e\left[ \sum_g(1-e^{-\beta_q|\varepsilon_g|})1_{\{g\le\mathrm{e}_q\}}
       \right]
       & =  E_{\tilde{Y}} \left[ \sum_g(1-e^{-\beta_q|\varepsilon_g|})
         P_e(g\le\mathrm{e}_q)\right] \\
       & = E_{\tilde{Y}} \left[ \sum_g
         P_e(|\varepsilon_g|>\mathrm{e}_q)
         P_e(\mathrm{e}_q>g) \right].
    \end{aligned}
  \end{equation}
Note that for each excursion interval $(g,d)$, the lifetime
$|\varepsilon_g|=d-g$. So by the memoryless property of the exponential and
since the excursion intervals are disjoint, the right side above is equal to
\begin{equation}
\begin{aligned}
  \nonumber
   E_{\tilde{Y}} \left[ \sum_g
     P_e(\mathrm{e}_q<d\,\big\vert\,\mathrm{e}_q>g)
         P_e(\mathrm{e}_q>g) \right]
         & =  E_{\tilde{Y}} \left[ \sum_g
           P_e\left(\mathrm{e}_q\in (g,d)\right) \right]\\
          & = E_{\tilde{Y}} \left[ P_e\left(\mathrm{e}_q\in
            [0,\infty)\setminus \overline{\mathscr{Z}}\right) \right],
\end{aligned}
\end{equation}
where $\overline{\mathscr{Z}}$ denotes the closure of the zero set of
$\tilde{Y}$. Since $X=V^*-\gamma\mu t$ is not a monotone or pure jump process,
this set has Lebesgue measure zero and the right side above equals one.
\end{proof}

Since the L\'evy measure of $\tilde{X}$ has bounded support, we can apply
Equation (3.3) of \cite{Baurdoux:2009} to the right side of
\eqref{e.comparison}, which in the notation of \cite{Baurdoux:2009} would be
written ``$\tilde{n}(|\varepsilon|>\mathrm{e}_q)$.'' Let $P_x$ denote the law
of $\tilde{X}+x$ and $\tau^x_q=\inf\{t\ge0:\tilde{X}(t)+x=0\}$ be the hitting
time of zero. Then \eqref{e.comparison} combined with \cite{Baurdoux:2009}
Equation (3.3) in our setting (in particular $h(x)$ there is simply $x$ here)
yields
\begin{equation}\nonumber
  n(\Delta (\varepsilon)>q) = \lim_{x\downarrow 0} \frac{P_x(\tau^x_q >
    \mathrm{e}_q)}{x} = \lim_{x\downarrow 0} \frac{1- E_x[e^{-\beta_q \tau^x_q}]}{x}.
\end{equation}
Observe that
\begin{equation}\nonumber
E_x[e^{-\beta_q \tau^x_q}] = e^{-x \phi_q(\beta_q)},
\end{equation}
with $\phi_q$ the right inverse of $\tilde{\Psi}_q$.

Thus, we obtain
\begin{equation}\label{e.compB}
n(\Delta (\varepsilon)>q) = \phi_q(\beta_q) = \phi_q(c_\alpha q^{-\alpha}/\alpha) =: h(q).
\end{equation}
Rewrite the last expression using \eqref{e.ModLevyExp} to get
\begin{equation}\nonumber
  c_\alpha q^{-\alpha}/\alpha = \tilde{\Psi}_q(h(q)) = h(q)+h(q)^\alpha + c_\alpha \int_q^\infty (1-e^{-h(q)x}) x^{-\alpha-1 } dx.
\end{equation}
This can be simplified to
\begin{equation}\nonumber
  h(q)+ h(q)^\alpha = c_\alpha \int_q^\infty e^{-h(q)x} x^{-\alpha-1 } dx.
\end{equation}
Defining $\kappa(q) = h(q)q$, performing a change of variables $t=x/q$, and letting $T_\alpha$ be a Pareto distributed random variable with index $\alpha$ we obtain
\begin{equation}\label{e.Last}
  q^{\alpha-1} \kappa(q) + \kappa(q)^\alpha = \frac{c_\alpha}{\alpha} E[e^{-\kappa(q) T_\alpha}].
\end{equation}
This equation can be transformed into Equation (7) in \cite{GTZ1} for $\kappa(y)$ 
(using $\lambda=1$ and $\gamma=-\Gamma(1-\nu)$
there), and so we see by Lemma 3.9 of \cite{GTZ1} that \eqref{e.Last} has a unique
solution $\kappa(q)$, which by Lemma 3.11 in \cite{GTZ1} is a continuous,
bounded, regularly varying function of $q$ with index $1-\alpha$.
Combining \eqref{e.LambdaFormula} with \eqref{e.compB} establishes
\eqref{e.thrm2main} and proves Theorem \ref{thrm2main}.

\bibliographystyle{acm}
\bibliography{GTZ}

\vspace{2ex}
\begin{minipage}[t]{2in}
\footnotesize {\sc Department of Mathematics\\
University of Virginia\\
Charlottesville, VA 22904\\
E-mail: gromoll@virginia.edu\\
E-mail: bat5ct@virginia.edu}
\end{minipage}
\hspace{5ex}
\begin{minipage}[t]{4in}
\footnotesize {\sc Centrum Wiskunde \& Informatica\\
P.O. Box 94079\\
1090 GB Amsterdam, Netherlands\\
E-mail: Bert.Zwart@cwi.nl} 
\end{minipage}

\end{document}